\documentclass[leqno,11pt]{amsart}
\usepackage{amssymb, amsmath,amsmath,latexsym,amssymb,amsfonts,amsbsy, amsthm,esint}
\usepackage{color}
\usepackage{comment}
\usepackage{graphicx}
\usepackage{tikz}
\usetikzlibrary{arrows, calc, decorations.markings,intersections, patterns,patterns.meta}
\usepackage{caption}
\usepackage{subcaption}
\usepackage{hyperref}

\usepackage{pgfplots}
\pgfplotsset{compat=1.15}
\usepackage{mathrsfs}

\makeatletter
\DeclareFontFamily{U}{tipa}{}
\DeclareFontShape{U}{tipa}{m}{n}{<->tipa10}{}
\newcommand{\arc@char}{{\usefont{U}{tipa}{m}{n}\symbol{62}}}%

\newcommand{\arc}[1]{\mathpalette\arc@arc{#1}}

\newcommand{\arc@arc}[2]{%
  \sbox0{$\m@th#1#2$}%
  \vbox{
    \hbox{\resizebox{\wd0}{\height}{\arc@char}}
    \nointerlineskip
    \box0
  }%
}
\makeatother

\let\pa=\partial
\let\al=\alpha

\let\g=\gamma

\let\lam=\lambda

\let\f=\frac

\let\Om=\Omega

\let\e=\varepsilon
\let\pa=\partial

\let\ri=\rightarrow
\let\na=\nabla

\newcommand{\beq}{\begin{equation}}
\newcommand{\eeq}{\end{equation}}
\newcommand{\beqo}{\begin{equation*}}
\newcommand{\eeqo}{\end{equation*}}
\newcommand{\ben}{\begin{eqnarray}}
\newcommand{\een}{\end{eqnarray}}
\newcommand{\beno}{\begin{eqnarray*}}
\newcommand{\eeno}{\end{eqnarray*}}

\newtheorem{theorem}{Theorem}[section]
\newtheorem{definition}[theorem]{Definition}
\newtheorem{lemma}[theorem]{Lemma}
\newtheorem{proposition}[theorem]{Proposition}
\newtheorem{corol}[theorem]{Corollary}

\theoremstyle{remark}
\newtheorem{step}{Step}

\newtheorem{rmk}{Remark}[section]

\newcommand{\dist}{\mathrm{dist}}

\newcommand{\BR}{\mathbb{R}}

\newcommand{\bx}{\mathbf{x}}
\newcommand{\bn}{\mathbf{n}}
\newcommand{\bt}{\boldsymbol{\tau}}

\begin{document}
\title{Classification of minimizing solutions to a two-dimensional Allen-Cahn system}

\author{Zhiyuan Geng}
\address{Department of Mathematical Sciences, University of Arkansas, 850 West Dickson Street, Fayetteville, AR 72701}
\email{zgeng@uark.edu}

\begin{abstract}
    We study bounded entire solutions $u:\BR^2\to \BR^2$ that minimize the Allen-Cahn functional
    \begin{equation*}
        J(u,\Omega)=\int_\Omega \left(\f12 |\na u|^2+W(u)\right)\,d\bx,
    \end{equation*}
    with the $D_3$-invariant triple-well potential
    \begin{equation*}
        W(u_1,u_2)=|u|^4+2u_1u_2^2-\f23 u_1^3-|u|^2+\f23.
    \end{equation*}
    We obtain a complete classification of entire minimizing solutions. In particular, when $u$ has a triple-junction structure at infinity, up to translation and orthogonal change of coordinates, $u$ has the explicit profile
    \begin{equation*}
        u_*(\bx)=\sum_{i=1}^3 \f{e^{\sqrt2 a_i\cdot \bx}}{\sum_{j=1}^3 e^{\sqrt2 a_j\cdot \bx}}a_i.
    \end{equation*}
    We also demonstrate that the solutions obtained by minimizing within the $D_3$-equivariant class coincide with $u_*$. The key ingredient is a calibration identity arising from the special algebraic structure of $W$.
\end{abstract}

\keywords{Allen-Cahn system, heteroclinic connection, entire minimizing solutions, triple-junction, classification of solutions}

\date{\today}
\maketitle

\section{Introduction}

In this paper, we investigate solutions of the two-dimensional Allen-Cahn system
\begin{equation}
\label{EL eq} \Delta u-W_u(u)=0,  \quad u:\mathbb{R}^2\to \mathbb{R}^2,  
\end{equation}
which minimize on compact sets the associated energy functional:
\begin{equation}\label{ene func}
     J(u,\Om):=\int_\Omega \left(\f12 |\na u|^2+W(u)\right)\,d\bx,\quad \Omega\subset\mathbb{R}^2.
\end{equation}
Here we consider the specific potential
\begin{equation}
    \label{potential specific} W(u_1,u_2)=|u|^4+2u_1u_2^2-\f23 u_1^3-|u|^2+\f23. 
\end{equation}
In polar coordinates with $(u_1, u_2)=(R\cos\theta,R\sin\theta)$, the potential takes the form 
\begin{equation*}
    W(R,\theta)= R^4-R^2+\frac23-\f23 R^3\cos{(3\theta)}.
\end{equation*}
It is straightforward to verify that $W$ has the following properties (see e.g. \cite{AlikakosFusco2009,alikakos2011entire}):
\begin{enumerate}
    \item[(P1).] $W\in C^\infty(\mathbb{R}^2;\mathbb{R}^+\cup\{0\})$ and $\{\bx: W(\bx)=0\}=\{a_1,a_2,a_3\}$ where
    \begin{equation}\label{coord: ai}
        a_1:=(1,0),\quad a_2:=(-\f12,\f{\sqrt{3}}2), \quad a_3:=(-\f12,-\f{\sqrt{3}}2).
    \end{equation}
    $\{a_i\}_{i=1}^3$ are called energy wells for the potential $W$. At each $a_i$, the Hessian matrix $D^2W(a_i)$ is positive definite. Moreover, $u\cdot W_u(u)>0$ for $|u|\geq 2$.
    \item[(P2).] Let $D_3$ denote the group of symmetries of the equilateral triangle, consisting of rotations by multiples of $\frac{2\pi}{3}$ about the origin and the reflections across lines $\{(x_1,x_2): x_2=0\}$, $\{(x_1,x_2): x_2=\sqrt{3}x_1\}$ and $\{(x_1,x_2):x_2=-\sqrt{3}x_1\}$. $W$ satisfies 
    \begin{equation*}
        W(g \bx)=W(\bx), \quad \forall \bx\in \mathbb{R}^2,\, g\in D_3.
    \end{equation*}
\end{enumerate}

For this specific choice of $W$, system \eqref{EL eq} can be rewritten as 
\begin{align}
    \label{eq u1} &\Delta u_1=4u_1(u_1^2+u_2^2)+2u_2^2-2u_1^2-2u_1,\\
    \label{eq u2} &\Delta u_2=4u_2(u_1^2+u_2^2)+4u_1u_2-2u_2.
\end{align}

We next specify the class of entire solutions for the system \eqref{EL eq} considered in this paper. 

\begin{definition}
    $u\in C^\infty(\mathbb{R}^2;\mathbb{R}^2)$ is called an \emph{entire minimizing solution} to \eqref{EL eq} if it satisfies \eqref{EL eq} in $\BR^2$, and for any bounded open set $K\Subset \mathbb{R}^2$ and every $v\in H_0^1(K;\mathbb{R}^2)$, 
    \begin{equation*}
        J(u,K)\leq J(u+v, K).
    \end{equation*}
\end{definition}

Our main result gives a complete classification of bounded, entire minimizing solutions to \eqref{EL eq}. 
\begin{theorem}\label{main thm}
    Suppose $u$ is a bounded, entire minimizing solution to \eqref{EL eq} with $W$ given by \eqref{potential specific}. Then exactly one of the following holds:
    \begin{itemize}
        \item[1.] Trivial constant solution. There exists $i\in\{1,2,3\}$ such that 
        \begin{equation*}
            u(\bx)\equiv a_i,\quad \forall \bx\in \mathbb{R}^2.
        \end{equation*}
        \item[2.] Two-phase solution with one-dimensional symmetry. There exist $i,\,j\in\{1,2,3\}$, $i\neq j$, $h\in \mathbb{R}$, $\mathbf{n}\in \mathbb{S}^1$, such that 
        \begin{equation}\label{2 phase sol}
            u(\mathbf{x})=  \frac{e^{\frac{\sqrt{6}}{2}(\mathbf{x}\cdot \mathbf{n}-h)} }{e^{\frac{\sqrt{6}}{2}(\mathbf{x}\cdot \mathbf{n}-h)}+e^{-\frac{\sqrt{6}}{2}(\mathbf{x}\cdot \mathbf{n}-h)}} \ a_i+ \frac{e^{-\frac{\sqrt{6}}{2}(\mathbf{x}\cdot \mathbf{n}-h)} }{e^{\frac{\sqrt{6}}{2}(\mathbf{x}\cdot \mathbf{n}-h)}+e^{-\frac{\sqrt{6}}{2}(\mathbf{x}\cdot \mathbf{n}-h)}} \ a_j.
        \end{equation}
        In other words, $u$ is a one-dimensional solution depending only on one scalar variable $\bx\cdot\mathbf{n}$. 
        \item[3.] Triple-junction solution. There exist $\bx_0\in \mathbb{R}^2$ and $Q\in O(2)$ such that 
        \begin{equation}
            \label{triple sol}
            u(\bx)= \sum_{i=1}^3 \frac{e^{\sqrt{2} a_i\cdot (Q \bx-\bx_0)}}{ \sum_{j=1}^3 e^{\sqrt{2} a_j\cdot (Q \bx-\bx_0)}}\ a_i,
        \end{equation}
        where $Q\in O(2)$ is an orthogonal transformation. 
    \end{itemize}
\end{theorem}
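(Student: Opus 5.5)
The plan is to combine a blow-down analysis at infinity with a calibration (Bogomolny-type) argument that exploits the special algebraic form of \eqref{potential specific}.

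First I will establish the standard a priori facts for bounded entire minimizers: $|u|\le 1$-type bounds via (P1), uniform gradient bounds, and the linear energy growth $J(u,B_r)\le Cr$ obtained by comparison with a competitor. I will then rescale $u_\e(\bx)=u(\bx/\e)$. The $\Gamma$-convergence theory for vector Allen--Cahn (Baldo, Fonseca--Tartar) together with the density estimates of Alikakos--Fusco type shows that, along subsequences, $u_\e$ converges in $L^1_{loc}$ to a minimizing partition of $\BR^2$ into the phases $\{u=a_i\}$. The interface cost is $\sigma_{ij}=\sigma$, the same for all pairs by (P2). Because these blow-downs are cones, the classification of minimizing cones in the plane with equal surface tensions leaves only three possibilities: one phase, two half-planes, or a $120^\circ$ triple junction. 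I will also have to show that the type is independent of the subsequence; monotonicity of the rescaled energy $r^{-1}J(u,B_r)$ should give this, since its limit equals $0$, $2\sigma$ or $3\sigma$ in the three cases.

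The heart of the argument is the calibration identity. The target profiles suggest writing $u=\f1{\sqrt2}\nabla\Phi$ with $\Phi=\log\sum_j e^{\sqrt2 a_j\cdot\bx}$. I expect, and will verify by direct computation, an identity of the form
\begin{equation*}
\f12|\na u|^2+W(u)=\f12\big|\mathcal{B}(u,\na u)\big|^2+\operatorname{div} G(u,\na u),
\end{equation*}
where $\mathcal{B}$ is a first-order operator whose zero set consists exactly of the solutions of a Bogomolny-type system. In complex notation $z=u_1+iu_2$, I expect this system to take the form $\pa_{\bar w} z=f(z,\bar z)$, with $f$ a polynomial satisfying $|f|^2=W$ up to a null-Lagrangian correction; I will determine the exact coefficients from the polar form of $W$. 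The divergence term has to be a null Lagrangian; concretely, a Jacobian term $\det\na u$ together with $\operatorname{div}$ of $H(u)\cdot$(direction) for a suitable potential $H$.

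Integrating over $B_r$ gives
\begin{equation*}
J(u,B_r)\ge \int_{\pa B_r} G\cdot\bn,
\end{equation*}
and the boundary integral depends only on the traces. Using the blow-down structure, that is, $u$ exponentially close to $a_i$ away from the asymptotic interfaces, I will compute the boundary integral as $r\to\infty$. The plan is to show that it equals, up to $o(1)$, the energy of the explicit candidate (the 1D profile \eqref{2 phase sol} or $u_*$) on the same ball after suitable translation and rotation. Minimality of $u$, compared against that candidate glued in a thin annulus, then forces $\int_{B_r}|\mathcal B|^2\to0$. Hence $\mathcal B(u,\na u)\equiv0$ on $\BR^2$.

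It then remains to integrate the first-order system. The substitution $u=\f1{\sqrt2}\na\Phi$, or equivalently $p_i=e^{\psi_i}/\sum_j e^{\psi_j}$, should linearize it to the conditions that each $\psi_i$ is affine with gradient $\sqrt2 a_i$ up to rotation, or that the relevant holomorphic function is an exponential. A Liouville argument using boundedness then yields exactly the profiles listed in the theorem: constants in case 1, \eqref{2 phase sol} when only two exponentials survive, and \eqref{triple sol} otherwise. The rate $\sqrt6/2$ in \eqref{2 phase sol} comes from $|a_i-a_j|\cdot\f{\sqrt2}{2}$. The three cases are mutually exclusive because their blow-downs are distinct.

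The main obstacle is the boundary-term step. I must show that the calibration lower bound is asymptotically sharp for an arbitrary minimizer with the given blow-down. This requires quantitative control: exponential convergence to $a_i$ away from the interfaces, and convergence of the interfaces to straight rays with $o(r)$ error, so that the $\pa B_r$ integrals of $G$ match those of the model. If this direct approach is too delicate, my fallback is to exploit that $G$ is a null Lagrangian. Its boundary integral depends only on the trace of $u$ and on the well values at the points where the interfaces cross $\pa B_r$. Once the phases are known to be correct near $\pa B_r$, this quantity is essentially topological ($2\sigma$ or $3\sigma$ times $r$ plus bounded terms). That would make the sharpness nearly automatic.
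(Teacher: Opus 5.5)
Your architecture is the paper's for the triple-junction case: a calibration special to this $W$, comparison with an explicit model on large domains, then integration of the first-order system. But the step you flag as the main obstacle is a genuine gap, and neither of your two proposals closes it.

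The lower bound is free. With $A=\na u-\mathcal N(u)$, one has $\f13\int_\Omega|A|^2\le J(u,\Omega)+\int_\Omega(\text{null Lagrangian})$, and the last term depends only on the trace of $u$. What must be proved is the matching upper bound. By minimality this means producing, on domains $\Omega_R$ exhausting $\BR^2$, a competitor $v$ with $v=u$ on $\pa\Omega_R$ and total defect $\int_{\Omega_R}\big(\f12|B|^2+\f13\det B\big)=o(1)$, where $B=\na v-\mathcal N(v)$. Evaluating the boundary integral, which is all your fallback does, says nothing about this. Errors that are merely bounded would give only $\na u-\mathcal N(u)\in L^2(\BR^2)$, not $\na u=\mathcal N(u)$. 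In fact, once the trace is known to be one-dimensional near each crossing, that boundary integral is insensitive to where the interfaces cross. This is because $\Phi(a_i)-\Phi(a_j)=\f{\sqrt2}{2}(a_i-a_j)$ is orthogonal to the outer normal $\bt_{ij}$ there, and the $\det$ term only sees the image curve. The difficulty lies entirely on the competitor side.

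Gluing $u_*(\cdot-\mathbf z)$ in a thin annulus, as you propose, has defect $o(1)$ only if $\|u-u_*(\cdot-\mathbf z)\|_{H^1(\pa\Omega_R)}\to0$. An $O(1)$ discrepancy in the position of a single interface, interpolated across a layer of width $O(1)$, already costs $O(1)$. Closing this needs two inputs your plan lacks.

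(i) Along each leg you need $u(x_1\bt_{ij}+x_2\bn_{ij})\to U_{ij}(x_2-h_{ij})$ with a definite shift $h_{ij}$, together with exponential decay to the wells (Proposition~\ref{prop:rigidity for triple junction}). This is quoted from the literature and applies because the minimizing heteroclinics are unique for this $W$ (Lemma~\ref{lem: 1d connection}), which you never establish. Convergence of the interfaces to rays with $o(r)$ error is far too weak.

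(ii) You need the compatibility $h_{12}+h_{23}+h_{31}=0$ (Lemma~\ref{lem:h sum 0}). The paper derives it from the first-moment identity $\mathrm{div}(TX)=0$, for the stress tensor $T$ and $X=(-x_2,x_1)$, on large equilateral triangles. Without it no translate of $u_*$ matches all three legs, since a translation has two parameters and there are three shifts. You could bypass (ii) with a competitor $\sum_i\lambda_i a_i$, $\lambda_i\propto e^{\sqrt2 a_i\cdot\bx+c_i(\bx)}$, whose phases $c_i$ vary by $O(1)$ over scale $R$; its defect is $O(1/R)$. But some such construction must be supplied, and a statement of type (i) is needed in any case.

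The same gap recurs in your treatment of the two-phase case, which the paper settles by citing Proposition~\ref{prop: 2 phase}.

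Two smaller points. First, the Cauchy--Riemann ansatz $\pa_{\bar w}z=f$ cannot work. Requiring the cross term to be a null Lagrangian forces $W=c|g(z)|^2$ with $g$ holomorphic, which is impossible for a quartic $W$ vanishing at three points. The calibrating system is the full overdetermined $\na u=\mathcal N(u)$, with remainder $\f12|A|^2+\f13\det A\ge\f13|A|^2$. Second, monotonicity of $r^{-1}J(u,B_r)$ needs a Modica-type inequality that is not available a priori for systems. The paper needs only one blow-down sequence plus the cited uniqueness of the triple-junction blow-down.
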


\begin{rmk}
    It is classical that the one-phase solution must be constant. The one-dimensional symmetry for the two-phase minimizing solutions is also previously known, see \cite[Theorem 1.3]{ss2024}. But the explicit formula \eqref{2 phase sol} for the specific potential \eqref{potential specific} is first derived here. The major novelty and highlight of the theorem is the unique identification of the triple-junction solution \eqref{triple sol}.
\end{rmk}

Understanding solutions of Allen-Cahn equation and system has been a central topic in PDE, geometric analysis and calculus of variations. The classical model is the scalar Allen-Cahn equation
\begin{equation}\label{scalar ac}
    \Delta u +u-u^3=0, \quad u: \mathbb{R}^N\to \mathbb{R},
\end{equation}
which is the Euler-Lagrange equation of the energy functional
\begin{equation*}
    E(u,\Om)=\int_\Om \f12|\na u|^2 +\f{(1-u^2)^2}{4} \,d\bx.
\end{equation*}

The celebrated conjecture of De Giorgi \cite{Degiorgi1978} states the following: let $u$ be a bounded entire solution of \eqref{scalar ac} which is monotone in one direction, say $\frac{\pa u}{\pa x_N}>0$. Then when $N\leq 8$, $u$ is one-dimensional. In particular, $u$ must be of the form $u(\bx)=\tanh{\left( \f{\bx\cdot\nu-c}{\sqrt2} \right)}$, for some $c\in \BR$, $\nu\in \mathbb{S}^{N-1}$. This conjecture shows a deep connection between Allen-Cahn solutions and minimal surfaces. Indeed, the threshold dimension $N=8$ here corresponds to the critical dimension in the Bernstein problem for minimal graphs. De Giorgi's conjecture has not been proved in its full generality. It was established in dimension 2 by Ghoussoub-Gui \cite{ghoussoub1998conjecture} and in dimension 3 by Ambrosio-Cabr\'{e} \cite{ambrosio2000entire}. For $4\leq N\leq 8$, Savin \cite{savin2009regularity} proved the conjecture under an extra condition on the limit of $u$ as $x_N\to\pm\infty$. Counterexamples in dimensions $N\geq 9$ were constructed by del Pino-Kowalczyk-Wei in \cite{del2011giorgi}. If one replaces the monotonicity-in-one-direction condition by minimality, the one-dimensional symmetry of entire minimizers was established by Savin for $N\leq 7$ \cite{savin2009regularity}, while counterexamples were constructed by Liu-Wang-Wei in \cite{liu2017global} for $N\geq 8$. 

When the potential $W$ has more than two energy wells, vector-valued order parameters arise naturally to describe multiphase transitions. And minimizing solutions are related to minimal partitions through the $\Gamma$-convergence technique, see e.g. \cite{sternberg1988effect,fonseca1989gradient,baldo}. As for entire solutions, which can be considered as asymptotic profiles near diffuse transition interfaces, one expects solutions with a multiphase junction structure to appear, whose sharp interface limits correspond to Almgren minimal cones. Moreover, even for double-well potentials, new profiles will also appear in vectorial setting due to the possible non-uniqueness of the heteroclinic connections between two phases. For example, Alama-Bronsard-Gui \cite{alama1997stationary} and Schatzman \cite{schatzman2002asymmetric} constructed non-one-dimensional entire minimizing solutions in $\BR^2$ which connect two different heteroclinic connections for a double-well potential.

A particularly interesting case is the genuinely two-dimensional profile when three phases meet at a codimension-2 junction set, which corresponds to a $Y$-shaped minimal cone. The first existence result for such a triple-junction solution in $\BR^2$ was obtained by Bronsard-Gui-Schatzman in \cite{bronsard1996three}, under the $D_3$-symmetry assumption on both $W$ and $u$. In their construction, $W$ satisfies (P1) and (P2), and $u$ is restricted to the equivariant class $u(g\,x)=g\,u(x)$ for $g\in D_3$. In the more general setting with higher dimensions, more energy wells and general finite symmetry groups, the existence of symmetric minimizing solutions was established by Alikakos-Fusco in \cite{alikakos2011entire}. More recently, Fusco \cite{fusco2024minimizing} proved a similar existence result under the rotational symmetry alone, thus eliminating the reflection symmetry requirements. All these constructions provide critical points of the energy functional, but stability under non-symmetric perturbations remains open. 

There are several recent developments on the minimizing triple-junction solutions without imposing symmetry hypotheses on the potential or the solution. Alikakos and the author \cite{alikakos2024triple}, Sandier and Sternberg \cite{ss2024} independently proved the existence of an entire minimizing solution with a triple-junction blow-down limit along a sequence $R_k\to\infty$. Following these works, the author \cite{geng2025uniqueness,geng2025rigidity} further proved the uniqueness of the blow-down limit as well as the almost one-dimensional symmetry along each leg of the interface under the extra assumption on the uniqueness of heteroclinic connections. We will discuss these results in greater detail in Section \ref{subsec: asymp}.

This leads to a natural rigidity question: \emph{if $W$ is $D_3$-invariant, must every entire minimizing solution with triple-junction asymptotics inherit the $D_3$-equivariance symmetry?} Under assumptions (P1) and (P2), two a priori different constructions are available. Bronsard-Gui-Schatzman \cite{bronsard1996three} and Alikakos-Fusco \cite{alikakos2011entire} construct entire solutions by solving constrained minimization problems within the $D_3$-equivariant class, whereas the existence results without symmetry constraints \cite{alikakos2024triple,ss2024} produce entire solutions that minimize with respect to arbitrary compactly supported perturbations. These two types of solutions have the same triple-junction asymptotic structure at infinity. However, symmetry of the energy functional does not in general force symmetry of the minimizer, and identical asymptotic behavior alone does not imply uniqueness. It is therefore very natural to ask whether these two solutions coincide, up to a translation and orthogonal transformation of the coordinates.

This paper answers these questions affirmatively for the specific potential $W$ given by \eqref{potential specific}. More precisely, Theorem \ref{main thm} classifies every entire minimizing solution with a triple-junction structure at infinity: after a suitable translation and orthogonal change of coordinates, it is given by the explicit profile $u_*$ \eqref{map:u*}. In particular, every such solution is $D_3$-equivariant with respect to a shifted center. The same conclusion holds for every $D_3$-equivariant minimizer arising from the construction of \cite{bronsard1996three,alikakos2011entire}, where the center is automatically fixed at the origin. Thus, the constrained and unconstrained variational constructions are always the same solution. To the best of our knowledge, this is the first complete classification of entire triple-junction minimizers for a genuinely vector-valued triple-well Allen-Cahn system. The explicit and rigid nature of \eqref{triple sol} also suggests applications in higher dimensions. For $N\geq 3$, it may serve as a canonical building block for asymptotic profiles near the ternary sharp interface, much as the one-dimensional heteroclinic $u(x)=\tanh(\f{x}{\sqrt{2}})$ does in the scalar Allen-Cahn theory. In particular, the cylindrical extension $U(x_1,\ldots,x_N)=u(x_1,x_2)$ provides a natural starting point for studying the rigidity of entire minimizing triple-junction solutions with $N-2$ invariant directions in higher dimensional space, with the specific potential \eqref{potential specific}. 

We note that the explicit profile $u_*$ in \eqref{map:u*} has appeared in the physics literature. After a suitable normalization, it coincides with an exact domain-wall BPS junction constructed by Kakimoto-Sakai \cite{kakimoto2003domain} and later included in a general family of symmetric domain-wall junctions by Eto-Kawaguchi-Nitta-Sasaki \cite{eto2020exact}. Those works construct particular solutions for a first-order BPS system, while the current paper demonstrates that $u_*$, up to the natural symmetries, is the unique entire Allen–Cahn minimizer with triple-junction asymptotics at infinity.

The potential \eqref{potential specific} was first constructed by Alikakos-Fusco in \cite{AlikakosFusco2009}. They considered the general family of quartic polynomials which are $D_3$-invariant:
\begin{equation*}
     W(u)=\al R^4+\beta R^3\cos{3\theta}+\gamma R^2+\delta,\quad\alpha>0,\,\beta,\g,\delta\in\mathbb{R}, \ \  u=(R\cos\theta,R\sin\theta).
\end{equation*}
The particular choice \eqref{potential specific} is canonical in the sense that after the normalization $\al=1$, $\beta,\g,\delta$ are uniquely determined by the conditions
\begin{align*}
    &\qquad\qquad\quad W(a_1)=0,\quad W_u(a_1)=0,\\
    &s\to W(a_1+s\nu) \text{ is nondecreasing as long as } a_1+s\nu \in \Omega_1,
\end{align*}
where $\nu\in\mathbb{S}^1$, and $\Omega_i$ is defined by \eqref{def Omi}. Here the radial-monotonicity of $W$ around $a_i$ in each $\Om_i$ is a special case (and the most natural case) for the technical Hypothesis 4 in \cite{AlikakosFusco2009,alikakos2011entire}, which is essential in their construction to provide necessary variational estimates to obtain a nontrivial entire solution as a limit of minimizers for a constrained minimizing problem of the Allen-Cahn functional on $B_R$ as $R\to\infty$. In the present work, we uncover additional algebraic structures specific to this potential that yield a much stronger conclusion, namely, the uniqueness of entire minimizing solutions with triple-junction asymptotics, up to translations and orthogonal transformations. It would be interesting to explore a broader class of potentials for which an analogous rigidity result holds for entire multiphase minimizers.

To illustrate our idea, we first examine the one-dimensional heteroclinic connection $U(x)=\tanh\f{x}{\sqrt2}$ for the scalar Allen-Cahn equation
\begin{equation*}
    U''+U-U^3=0,\quad U(+\infty)=1,\,U(-\infty)=-1.
\end{equation*}
Multiplying the equation by $U'$, we get the conservation law
\begin{equation*}
    \frac{d}{dx} \left(\f{|U'|^2}{2}-\f{(1-|U|^2)^2}{4}\right)=0. 
\end{equation*}
For a finite energy profile, $\f{|U'|^2}{2}-\f{(1-|U|^2)^2}{4}$ needs to vanish, which yields the first-order equation
\begin{equation}\label{1st eq scalar}
    U'=\pm\sqrt{2W(U)}=\pm\frac{1-U^2}{\sqrt2}. 
\end{equation}
From the limits of $U$ at infinity, we choose $U'>0$ and integrate the equation \eqref{1st eq scalar} using separation of variables and obtain $U(x)=\tanh(\f{x}{\sqrt{2}})$, which is unique up to translations.

Now we look at the two-dimensional problem from the same perspective. The preceding computation shows that the essential source of the rigidity for one-dimensional heteroclinic connection is the reduction of the second-order Euler-Lagrange equation to a first-order equation. Remarkably, the potential \eqref{potential specific} admits an exact two-dimensional analogue of this reduction. More precisely, there exists an explicitly defined matrix field $\mathcal{N}:\mathbb{R}^2\to\mathbb{R}^{2\times 2}$, given in \eqref{def of N}, such that
\begin{equation*}
  W(u)=\frac12|\mathcal{N}(u)|^2+\frac13\det \mathcal{N}(u).  
\end{equation*}
Moreover, the Allen-Cahn energy density satisfies the following inequality
\begin{equation}\label{intro:cal}
\frac12|\nabla u|^2+W(u)\geq  \text{ null-Lagrangian terms }+\frac13|\nabla u-\mathcal{N}(u)|^2.
\end{equation}
Thus $\mathcal{N}(u)$ provides a two-dimensional counterpart of the term $\sqrt{2W(U)}$ as in scalar problem. This algebraic compatibility between $W$ and $\mathcal{N}$ is the main source of the rigidity of \eqref{potential specific}.

For an arbitrary minimizing solution with triple-junction asymptotics, the results recalled in Section \ref{subsec: asymp} show that along the directions $\bt_{ij}$ of three interfaces, $u$ converges to translated heteroclinic connections $U_{ij}( \bx\cdot \bt_{ij}^\perp-h_{ij})$. A first-moment identity for the stress tensor yields the compatibility condition
$$
h_{12}+h_{23}+h_{31}=0,
$$
which implies that the three shifts arise from a single translation of the center by $\mathbf{z}\in\mathbb{R}^2$. Hence $u$ and the translated explicit solution $u_*(\cdot-\mathbf{z})$ have asymptotically identical boundary values on large equilateral triangles. Comparing their energies, using the minimality of $u$ and the inequality \eqref{intro:cal} above, forces the nonnegative defect $|\nabla u-\mathcal{N}(u)|^2$ to vanish identically. It remains only to integrate this first-order system and obtain the explicit formula for $u$. 

The proof is organized accordingly. In Section \ref{sec:pre}, we determine the minimizing heteroclinic connections and recall the classification and asymptotic behavior of blow-down limits for entire solutions, which also settle the one-phase and two-phase cases. In Section \ref{sec:u*}, we construct the explicit triple-junction solution $u_*$ and verify its symmetry, first-order and second-order equations, and asymptotics at infinity. Finally, Section \ref{sec:proof} establishes the compatibility of the three shifts $h_{ij}$, derives the first-order system $\na u=\mathcal{N}(u)$, and integrates that system to complete the proof of Theorem \ref{main thm}.

\section{Preliminaries}\label{sec:pre}

\subsection{Heteroclinic connection.} For $i,j\in \{1,2,3\}$, we need to understand the behavior of $u$ across the interface separating the two phases $a_i$ and $a_j$. We call $U_{ij}\in W^{1,2}_{loc}(\mathbb{R},\mathbb{R}^2)$ a \emph{heteroclinic connection} between $a_i$ and $a_j$ if $U_{ij}$ solves the following variational problem:
\begin{align*}
    &U_{ij} \text{ minimizes }\int_\mathbb{R} \left(\f12| u'|^2+W(u)\right)\,dx,\\
    &\lim\limits_{x\to-\infty} U_{ij}(x)=a_i,\quad \lim\limits_{x\to\infty} U_{ij}(x)=a_j.
\end{align*}
The existence of heteroclinic connections for any $i\neq j$ is classical, see for instance \cite{alikakos2008connection,zuniga2016heteroclinic}. 

For $a_3=(-\f12,-\frac{\sqrt{3}}{2}),\, a_2=(-\f12,\frac{\sqrt{3}}{2})$, we derive an explicit formula for the heteroclinic connection $U_{32}$. For convenience, we write $U=U_{32}$. $U=(U_1,U_2)$ satisfies
\begin{equation}\label{ode:connection}
\begin{split}
    &U_1''= 4U_1(U_1^2+U_2^2)+2U_2^2-2U_1^2-2U_1,\quad U_1(-\infty)=U_1(+\infty)=-\f12,\\
    &U_2''=4U_2(U_1^2+U_2^2)+4U_1U_2-2U_2,\quad U_2(-\infty)=-\f{\sqrt{3}}{2},\  U_2(+\infty)=\frac{\sqrt{3}}{2}.
\end{split}
\end{equation}

Define new variables
\begin{equation*}
    s=U_1+\f12,\quad v=U_2,
\end{equation*}
then the energy functional becomes
\begin{equation}\label{ene s v}
    E(s,v)=\int_{\mathbb{R}} \left(\f12|s'|^2+\f12|v'|^2+ s^2(s^2-\f83 s +\f32+2v^2)+(v^2-\f34)^2\right), 
\end{equation}
with the associated Euler-Lagrange system
\begin{equation}\label{ode:connection 2}
\begin{split}
    &s''=s(4(s-1)^2+4v^2-1),\quad s(-\infty)=s(+\infty)=0,\\
    &v''=v(4v^2+4s^2-3),\quad v(-\infty)=-\f{\sqrt{3}}{2},\  v(+\infty)=\frac{\sqrt{3}}{2}.
\end{split}
\end{equation}

Observe that when $s\equiv 0 $, or equivalently $U_1\equiv -\f12$, the ODE for $s$ is trivially satisfied. Then the energy for $v_2$ becomes the classical scalar Allen-Cahn functional $\int_{\mathbb{R}} \f12|v'|^2+(v^2-\f34)^2\,dx$, 
whose classical heteroclinic connection is given by 
\begin{equation*}
    v(x)=\f{\sqrt{3}}{2} \tanh{(\f{\sqrt{6}}{2}x)}.
\end{equation*}
Therefore we obtain a solution to the ODE system \eqref{ode:connection}:
\begin{equation}
    U=\left(-\f12, \f{\sqrt{3}}{2} \tanh{(\f{\sqrt{6}}{2}x)}\right).
\end{equation}
It remains to verify that it is the unique minimizer.

\begin{lemma}\label{lem: 1d connection}
    Up to translation in $x$, $U=(-\f12, \f{\sqrt{3}}{2} \tanh(\f{\sqrt{6}}{2}x))$ is the only (minimizing) heteroclinic connection between $(-\f12,\pm\f{\sqrt{3}}{2})$.
\end{lemma}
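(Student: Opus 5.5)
The plan is to use a calibration argument: build a function $\varphi:\BR^2\to\BR$ with $|\na\varphi|^2\le 2W$ everywhere, with equality only on the segment $\{u_1=-\f12\}$ joining $a_3$ to $a_2$. First I record the energy of the candidate $U=(-\f12,\f{\sqrt3}{2}\tanh(\f{\sqrt6}{2}x))$. On $s\equiv0$, \eqref{ene s v} reduces to the scalar functional with potential $(v^2-\f34)^2$. The first integral $\f12|v'|^2=(v^2-\f34)^2$ (the analogue of \eqref{1st eq scalar}) gives
\[
E(0,v)=\int_{-\sqrt3/2}^{\sqrt3/2}\sqrt2\Bigl(\f34-v^2\Bigr)\,dv=\f{\sqrt6}{2}.
\]
So it suffices to show that every competitor $(s,v)$ with the boundary conditions of \eqref{ode:connection 2} has $E(s,v)\ge \f{\sqrt6}{2}$, with equality only if $s\equiv0$.

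The natural first choice is $\varphi_0(s,v)=\sqrt2\,(\f34 v-\f13 v^3)$, since $|\na\varphi_0|^2=2(\f34-v^2)^2$. However, $2W-|\na\varphi_0|^2=2s^2\bigl(s^2-\f83 s+\f32+2v^2\bigr)$, and the quadratic $s^2-\f83s+\f32$ has roots $(8\pm\sqrt{10})/6$. The bracket is therefore negative in a small region near $v=0$, $s\approx1.2$, so $\varphi_0$ alone does not calibrate. I would instead look for a polynomial correction
\[
\varphi(s,v)=\sqrt2\Bigl(\f34 v-\f13 v^3+ \beta\, s v+\gamma\, s^2 v\Bigr),
\]
which is odd in $v$ (respecting the reflection $v\mapsto -v$ in $D_3$) and agrees with $\varphi_0$ on $s=0$. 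Hence $\varphi(a_2)-\varphi(a_3)=\f{\sqrt6}{2}$ still holds, since $a_2,a_3$ correspond to $s=0$, $v=\pm\f{\sqrt3}{2}$.

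The goal is to choose $\beta,\gamma$ so that $2W-|\na\varphi|^2$ is a sum of squares (ideally a single square times a nonnegative factor) vanishing exactly on $\{s=0\}$. I expect this to be guided by the matrix field $\mathcal N$ announced in the introduction, since the heteroclinic should satisfy $U'=\mathcal N(U)\bn$. I expect this algebraic step to be the main obstacle: finding coefficients for which the polynomial inequality $|\na\varphi|^2\le 2W$ holds globally. If no cubic $\varphi$ works, the fallback is a truncation argument. One would show that a minimizing connection stays in the strip where the bracket $s^2-\f83s+\f32+2v^2$ is positive. To do this, project any path leaving that region back into it, and check via (P1)-type monotonicity that the projection does not increase the energy. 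On that strip $\varphi_0$ suffices.

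Once $\varphi$ is in hand, the conclusion is standard. For any admissible $(s,v)=:\gamma(x)$,
\[
E\ge\int_{\BR}\sqrt{2W(\gamma)}\,|\gamma'|\,dx\ge\int_{\BR}|\na\varphi(\gamma)|\,|\gamma'|\,dx\ge\int_{\BR}\na\varphi(\gamma)\cdot\gamma'\,dx=\varphi(a_2)-\varphi(a_3)=\f{\sqrt6}{2}.
\]
Equality forces $|\na\varphi(\gamma)|^2=2W(\gamma)$ for all $x$, so $\gamma$ lies in $\{s=0\}$ and $U_1\equiv-\f12$. Equality also forces $\f12|\gamma'|^2=W(\gamma)$ with $\gamma'$ parallel to $\na\varphi$. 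Thus $v'=\sqrt2(\f34-v^2)$, and separation of variables gives $v=\f{\sqrt3}{2}\tanh(\f{\sqrt6}{2}(x-x_0))$. This shows that every minimizer equals $U$ up to translation.
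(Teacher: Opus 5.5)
Your calibration route is different from the paper's and it can be completed. As submitted, though, the decisive step is missing: you never exhibit $\beta,\gamma$. The fallback is also not a proof. The set where $s^2-\frac83 s+\frac32+2v^2\ge 0$ is the complement of the ellipse $(s-\frac43)^2+2v^2<\frac{5}{18}$, which contains $a_1$. That set is not convex, no energy-nonincreasing retraction onto it is apparent, and (P1) gives no monotonicity near $a_1$.

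Your ansatz in fact leaves no freedom. On $\{s=0\}$ one finds $2W-|\nabla\varphi|^2=-2\beta^2v^2$, so $\beta=0$. Since $W(a_1)=0$, any admissible $\varphi$ must satisfy $\nabla\varphi(a_1)=0$; this is why $\varphi_0$ fails. Here that condition reads $\frac34+\frac94\gamma=0$, so $\gamma=-\frac13$. Then
\[
\varphi=\frac{\sqrt2}{3}\,v\Bigl(\frac94-s^2-v^2\Bigr),\qquad 2W-|\nabla\varphi|^2=\frac{16}{9}\,s^2\Bigl(\bigl(s-\frac32\bigr)^2+v^2\Bigr)\ge 0,
\]
and $\varphi(a_2)-\varphi(a_3)=\frac{\sqrt6}{2}$, so your chain of inequalities holds for every admissible path.

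One correction to the equality case: the contact set is $\{s=0\}\cup\{a_1\}$, not just the segment, and it must contain $a_1$ for any admissible $\varphi$. For a minimizer, at a.e. $x$ either $\gamma'=0$, whence $W(\gamma)=0$, or $\gamma\in\{s=0\}\cup\{a_1\}$. By continuity $\gamma(\mathbb{R})\subset\{s=0\}\cup\{a_1\}$. Connectedness of $\gamma(\mathbb{R})$ together with the limits $a_3,a_2$ then puts it in $\{s=0\}$.

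With that filled in, compare with the paper. The paper folds the curve by the two reflections of $D_3$ across $\{x_2=\pm\sqrt3x_1\}$ into the sector where $s\le0$. There the bracket is bounded below by a positive constant, so truncating $s$ to $0$ lowers the energy by at least $\int\frac12|s'|^2+\frac5{12}s^2$. That argument is quicker and needs no guess. Strictly, though, it identifies the folded curve, and a short extra step is needed to return to the original minimizer.

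Your calibration gives the sharp bound $E\ge\frac{\sqrt6}{2}$ for all admissible paths without any symmetry reduction. Its equality case pins down the minimizer itself and produces the first-order equation $v'=\sqrt2(\frac34-v^2)$ directly. It is also the one-dimensional trace of the paper's main tool. In the original variables, $\varphi=\frac{\sqrt2}{3}u_2(2-u_1-|u|^2)$ is exactly the second component of $\Phi$ in the proof of Proposition~\ref{prop: 1st order eq}. Restricting \eqref{calibration 2} to $u(\mathbf{x})=\gamma(x_2)$ yields the same lower bound with defect at least $\frac23 s^2|u-a_1|^2$, since there $\det\nabla u=0$ and the first column of $\mathcal{N}(u)$ equals $-\sqrt2\,s\,(u-a_1)$.
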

\begin{proof}
    Assume $U=(U_1,U_2)$ is a minimizing heteroclinic connection. It suffices to show that $U_1\equiv -\f12$. By symmetry, we may assume that the connection $\{U(x): x\in\mathbb{R}\}$ is entirely contained in the sector  $\{(r\cos\varphi,r\sin\varphi):\varphi\in[\f{2\pi}3,\f{4\pi}{3}]\}$. Indeed, if any portion of the curve lies outside this sector, then by applying successive reflections across the lines $\{x_2=-\sqrt{3}x_1\}$ and $\{x_2=\sqrt{3}x_1\}$, we can fold the curve back into the sector without increasing the total energy. Consequently, for every $x\in \mathbb{R}$, we may assume $s(x)=U_1(x)+\f12\leq 0$.

    Examining the energy \eqref{ene s v}, we consider the natural competitor $(0,v)$ and compute the energy difference,
    \begin{equation*}
        E(s,v)-E(0,v)=\int_{\mathbb{R}} \left( \f12|s'|^2+s^2(s^2-\f83 s+\f32+2v^2) \right)\,dx \geq \int_{\mathbb{R}} \f12|s'|^2+\f{5}{12} s^2\,dx, 
    \end{equation*}
    where we used $s^2-\f83 s+\f32\geq \f{5}{12}$ when $s\leq \f12$. Therefore, truncation of $s$ to $0$ will strictly reduce the total energy. Once we fix $s\equiv 0$, the remaining component $v$ must be the unique scalar connection up to translation. The proof is complete.
    
\end{proof}

Finally, we rewrite $U_{32}$ as
\begin{equation}\label{formula: U32}
    U_{32}(x)=\frac{e^{\f{\sqrt{6}}{2}x}}{e^{\f{\sqrt{6}}{2}x}+e^{-\f{\sqrt{6}}{2}x}}\ a_2+ \frac{e^{-\f{\sqrt{6}}{2}x}}{e^{\f{\sqrt{6}}{2}x}+e^{-\f{\sqrt{6}}{2}x}} \ a_3,\quad \text{up to translation}.
\end{equation}
Using the $D_3$-symmetry, we have for any $i\neq j\in \{1,2,3\}$,
\begin{equation}\label{formula: 1d connection}
    U_{ij}(x)=\frac{e^{\f{\sqrt{6}}{2}x}}{e^{\f{\sqrt{6}}{2}x}+e^{-\f{\sqrt{6}}{2}x}}\ a_j+ \frac{e^{-\f{\sqrt{6}}{2}x}}{e^{\f{\sqrt{6}}{2}x}+e^{-\f{\sqrt{6}}{2}x}} \ a_i,\quad \text{up to translation.}
\end{equation}

\subsection{Asymptotic behavior of $u$ at infinity.}\label{subsec: asymp}

The entire minimizing solution $u$ to the Allen-Cahn system \eqref{EL eq} is closely related to the minimal partition problem. More precisely, we have the following characterization of its possible blow-down limits at infinity, see e.g. \cite{alikakos2013structure,ss2024,alikakos2024triple}.

\begin{proposition}
Let $u$ be a bounded entire minimizing solution of \eqref{EL eq}. Then there exists a sequence $r_k\to \infty$ such that
\begin{equation}\label{L1 conv to blowdown}
    u(r_k\bx)\xrightarrow[k\to\infty]{L^1_{loc}(\mathbb{R}^2)} u_0,
\end{equation}
where $u_0\in \mathrm{BV}_{loc}(\mathbb{R}^2; \{a_1,a_2,a_3\})$ has one of the three following forms:
\begin{itemize}
    \item[1.] \emph{One-phase profile. }$u_0\equiv a_i$ for some $i\in\{1,2,3\}$.
    \item[2.] \emph{Two-phase profile. }$u_0=a_i\mathbf{1}_{D}+a_j \mathbf{1}_{\mathbb{R}^2\setminus D}$, where $i\neq j\in \{1,2,3\}$ and $D$ is a half-plane bounded by a line through the origin. 
    \item[3.] \emph{Triple-junction profile. }$u_0=\sum_{i=1}^3 a_i \mathbf{1}_{\Om_i}$, where $\{\Om_1,\Om_2,\Om_3\}$ partitions $\mathbb{R}^2$ into three infinite sectors, each with opening angle $\f{2\pi}3$ and vertex at the origin.
\end{itemize}

\end{proposition}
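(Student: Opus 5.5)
This is the standard blow-down compactness argument for vector Allen--Cahn minimizers, combined with the classification of two-dimensional minimal partitions. Set $u_r(\bx):=u(r\bx)$. Then $u_r$ is a bounded local minimizer of the rescaled functional
\[
J_r(v,\Om)=\int_\Om \Big(\frac{1}{2r}|\na v|^2+r\,W(v)\Big)\,d\bx,
\]
since $J(u,rB)=r\,J_r(u_r,B)$.

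\emph{Step 1: linear energy bound.} Boundedness of $u$ and elliptic estimates give $|\na u|\le C$ on $\BR^2$. Comparing $u$ on $B_R$ with the competitor equal to $a_1$ on $B_{R-1}$ and interpolating linearly in the annulus $B_R\setminus B_{R-1}$ yields
\[
J(u,B_R)\le CR .
\]
Equivalently, $J_r(u_r,B_\rho)\le C\rho$ uniformly in $r$.

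\emph{Step 2: compactness.} Let $\phi_{ij}$ be the degenerate metric induced by $\sqrt{2W}$. By the Modica--Mortola/Baldo inequality, the maps $\phi_{ij}\circ u_r$ have locally bounded total variation. Baldo's $\Gamma$-convergence compactness then gives a sequence $r_k\to\infty$ with $u_{r_k}\to u_0$ in $L^1_{loc}$, where $u_0\in \mathrm{BV}_{loc}(\BR^2;\{a_1,a_2,a_3\})$. Here we use that $\int W(u_r)\to 0$ forces $u_0$ to take values in the wells, and that $|u|\le C$ handles the noncompact range.

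\emph{Step 3: minimality of $u_0$.} Standard $\Gamma$-convergence arguments show that local minimality passes to the limit: $u_0$ is a local minimizer of the partition functional
\[
\sum_{i<j}\sigma_{ij}\,\mathcal H^1(\pa^*\Om_i\cap\pa^*\Om_j),\qquad \sigma_{ij}=\phi_{ij}(a_i,a_j)>0,
\]
with respect to compactly supported changes of the partition. This is the delicate step. It requires a gluing construction to build recovery sequences that match $u_{r_k}$ near the boundary of a ball; on a good radius chosen by Fubini, the $L^1$ convergence is upgraded using the energy bound.

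\emph{Step 4: cone structure.} Next one shows $u_0$ is a cone. Either invoke a monotonicity formula for the minimizing partition and blow down once more, or, as is standard, obtain conicality by passing to the limit in the Allen--Cahn monotonicity formula $\frac{d}{dR}\big(R^{-1}J(u,B_R)\big)\ge 0$ (Alikakos--Fusco/Modica type). That formula shows the density $\lim_R R^{-1}J(u,B_R)$ exists and is constant, which makes the blow-down limit $0$-homogeneous.

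\emph{Step 5: classify planar minimal cones.} By $D_3$ symmetry all $\sigma_{ij}$ are equal to a common value $\sigma$. A minimizing cone in $\BR^2$ is a union of rays from the origin. Calibration and first-variation (stationarity) arguments give the following:
\begin{itemize}
\item if no rays occur, $u_0$ is constant;
\item two rays bounding two phases must be opposite;
\item three rays meeting at the origin must form angles of $2\pi/3$, by the balance condition $\sum\sigma\,\bt=0$;
\item four or more rays are excluded, since the quadruple junction is not minimizing: a local perturbation splitting it into two triple junctions strictly lowers the length.
\end{itemize}
Moreover, two adjacent sectors carrying the same phase can be merged, so each configuration reduces to one of the three listed forms.

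The main obstacle I expect is Step 3, the transfer of minimality to the limit together with the conical structure. For this I would rely on the cited works \cite{alikakos2013structure,ss2024,alikakos2024triple} rather than redo the argument.
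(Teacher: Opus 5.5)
Your outline is the standard argument behind the works the paper cites for this proposition; the paper gives no proof of its own. Steps 1--3 and 5 are fine at the level of detail you give. The problem is the second option in Step 4, which you present as the standard one.

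For systems there is no monotonicity formula $\frac{d}{dR}\big(R^{-1}J(u,B_R)\big)\ge 0$. The stress-tensor (Pohozaev) identity gives
\[
\frac{d}{dR}\Big(\frac{J(u,B_R)}{R}\Big)=\frac{1}{R}\int_{\partial B_R}|\partial_r u|^2\,d\mathcal{H}^1+\frac{1}{R^2}\int_{B_R}\Big(W(u)-\frac12|\nabla u|^2\Big)\,d\bx .
\]
The last integral has a sign only under a Modica-type bound $\frac12|\nabla u|^2\le W(u)$. That bound comes from a maximum-principle argument for scalar equations and is not available a priori for vector-valued solutions. The weak monotonicity formula known for systems (Alikakos) concerns $R^{2-N}J(u,B_R)$, which is vacuous in dimension two.

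Consequently you cannot assert that $\lim_{R\to\infty}R^{-1}J(u,B_R)$ exists, or that every blow-down is $0$-homogeneous. Independence of the blow-down from the sequence is the deep uniqueness result quoted in Proposition \ref{prop:rigidity for triple junction} \cite{geng2025uniqueness}. That result takes a triple-junction blow-down along some sequence as its hypothesis, so it is downstream of the present statement. A priori, the limit $u_0$ from Step 3 is only an entire minimizing partition, not necessarily a cone with vertex at the origin.

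Your first option is the one that works, and it is legitimate because the proposition only asserts the existence of \emph{some} sequence. The argument runs as follows.
\begin{enumerate}
\item With equal tensions, the interface of $u_0$ is a stationary one-dimensional varifold. By Step 1 and the $\Gamma$-liminf inequality, its mass in $B_\rho$ is at most $C\rho$.
\item Its density ratio is therefore monotone and bounded. A further blow-down $u_0(s_m\,\cdot)\to u_{00}$ is then a minimizing cone, using compactness of minimizing partitions.
\item A change of variables shows $u(r_k s\,\cdot)\to u_0(s\,\cdot)$ in $L^1_{loc}$ for every fixed $s>0$. Hence you can choose $k_m$ so that $u(r_{k_m}s_m\,\cdot)\to u_{00}$ in $L^1_{loc}$, which is a diagonal blow-down of $u$.
\item Step 5 is then applied to $u_{00}$ rather than to $u_0$.
\end{enumerate}
With Step 4 done this way, and the compactness and minimality-transfer results taken from the cited works, the argument is complete.
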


We now examine three possible blow-down limits separately. 

\begin{proposition}\label{prop: 1 phase}
    Let $u$ be a bounded entire minimizing solution of \eqref{EL eq} and $u_0$ be a blow-down limit of $u(r_k\bx)$ given by \eqref{L1 conv to blowdown}. If $u_0\equiv a_i$, then $u(\bx)\equiv a_i$ in $\BR^2$. 
\end{proposition}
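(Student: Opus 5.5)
We prove that a one-phase blow-down forces $u\equiv a_i$ by combining the energy upper bound coming from minimality with the density and clearing-out estimates for vector Allen-Cahn minimizers. Since $u$ is minimizing and bounded, comparison with a competitor that interpolates from $u$ to $a_i$ in an annulus $B_{r+1}\setminus B_r$ gives the standard linear bound
\begin{equation*}
J(u,B_r)\le Cr .
\end{equation*}
Rescale by $u_k(\bx)=u(r_k\bx)$. Then $u_k$ minimizes the $\e_k$-functional $\int \f{\e_k}{2}|\na u_k|^2+\f1{\e_k}W(u_k)$ with $\e_k=1/r_k$, and its energy in $B_1$ equals $r_k^{-1}J(u,B_{r_k})$. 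We also have $u_k\to a_i$ in $L^1(B_2)$.

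First we show that the rescaled energy tends to zero. Because $u_k\to a_i$ in $L^1$, most of $B_2$ carries values near $a_i$. For a suitable radius $\rho\in(1,2)$, chosen by Fubini so that $u_k$ is close to $a_i$ on most of $\pa B_\rho$ with controlled boundary energy, we build a competitor. It equals $a_i$ inside $B_{\rho-\e_k}$ and interpolates in a thin layer to $u_k$ on $\pa B_\rho$. The cost of this competitor is bounded by $C\,\mathcal{H}^1(\{x\in\pa B_\rho: |u_k-a_i|>\delta\})$ plus $o(1)$, and this tends to zero. By minimality, $\e$-energy$(u_k,B_1)\to0$, that is, $J(u,B_{r_k})=o(r_k)$.

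Next we apply the clearing-out (density) lemma. For minimizers with the nondegenerate wells of (P1), if $|u(\bx_0)-a_j|\ge\delta$ for every $j$, then $J(u,B_r(\bx_0))\ge c\,r$ for all $r\ge1$; this is the standard lower density estimate, as in Caffarelli–Córdoba or Alikakos–Fusco. Consequently, if $u$ took a value away from all the wells at some $\bx_0$, we would get $J(u,B_{r_k})\ge J(u,B_{r_k/2}(\bx_0))\ge c r_k/2$ for large $k$, contradicting $o(r_k)$. Hence $u$ stays uniformly close to $\{a_1,a_2,a_3\}$ everywhere. By connectedness and continuity, $u$ is then within $\delta$ of a single well, which must be $a_i$ because of the $L^1$ limit.

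Finally, once $|u-a_i|<\delta$ on $\BR^2$, the positive definiteness of $D^2W(a_i)$ gives $\Delta|u-a_i|^2\ge c|u-a_i|^2$. A bounded function satisfying this on $\BR^2$ must vanish identically, by comparison with $M\cosh(\sqrt{c}\,x_1/2)$-type barriers on large balls. Therefore $u\equiv a_i$.

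The main obstacle is the density/clearing-out estimate, specifically the linear lower bound on balls centered at a non-well point for all large radii. The plan is to cite it from the literature rather than reprove it; the rest of the argument consists of routine comparison constructions.
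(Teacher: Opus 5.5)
Your proof is correct, but it takes a considerably longer route than the paper's.

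\textbf{The paper's proof.} The paper gets the proposition in one step from the measure-theoretic density estimate (Lemma~\ref{lem: caf-cor}), applied to the single well $a_i$ at an arbitrary threshold. Suppose $u(\bx_0)\neq a_i$ and set $\lam=\f12|u(\bx_0)-a_i|$. By continuity, $\mathcal{L}^2(B(\bx_0,r_0)\cap\{|u-a_i|>\lam\})\geq\mu_0>0$ for some small $r_0$. The lemma then gives $\mathcal{L}^2(B(\bx_0,r)\cap\{|u-a_i|>\lam\})\geq Cr^2$ for all $r\geq r_0$. Taking $r=r_k$ and rescaling, $\{|u(r_k\cdot)-a_i|>\lam\}$ has measure at least $C$ in $B_2$ for large $k$. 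This contradicts $L^1_{loc}$ convergence to $a_i$, since that convergence implies convergence in measure. No energy bounds, competitor construction, connectedness argument or Liouville step are needed.

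\textbf{Your route.} Each of your steps is valid:
\begin{itemize}
\item the linear bound $J(u,B_r)\leq Cr$;
\item $J(u,B_{r_k})=o(r_k)$ via a Fubini-selected circle. This works because on a good circle the unscaled boundary energy is $O(1)$, and the radial interpolation cost where $|u-a_i|\le\delta$ is controlled by $W$;
\item energy clearing-out at points far from all wells;
\item connectedness, giving $|u-a_i|<\delta$ everywhere;
\item the subsolution argument for $|u-a_i|^2$.
\end{itemize}

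\textbf{How the two compare.} In the vector setting, the energy clearing-out lemma you cite is normally derived from the same density estimate. One applies it to every well, so that two phases each fill a fixed fraction of $B_r(\bx_0)$, and then uses a coarea/relative-isoperimetric argument for $d_W(u,a_j)$. So your route does not avoid the key ingredient; it adds layers on top of it. Your final Liouville step is needed only because you apply clearing-out at points far from \emph{all} wells, instead of using density relative to $a_i$ with arbitrarily small $\lam$.

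Your approach does yield extra information: the rescaled energy of $u(r_k\cdot)$ on bounded sets tends to zero, and the last step uses nothing beyond nondegeneracy of the well. The paper's approach gains brevity.
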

This Liouville-type result has been mentioned in several works. It follows directly from the following vector version of the Caffarelli-C\'{o}rdoba density estimate \cite{AF3}.
\begin{lemma}\label{lem: caf-cor}
    Let $\Omega\subset \BR^2$. Suppose $u\in W^{1,2}(\Omega,\BR^2)\cap L^\infty(\Om,\BR^2)$ is a minimizing solution of \eqref{EL eq}.  If for some $r_0,\,\lam,\,\mu_0>0$, $x\in\Omega$, $i\in \{1,2,3\}$,
    \beqo
    \mathcal{L}^2(B(x,r_0)\cap \{\vert u-a_i\vert>\lam\})\geq \mu_0,
    \eeqo
    then there exists a constant $C(\mu_0,\lam,r_0)>0$ such that
    \beqo
    \mathcal{L}^2(B(x,r)\cap \{|u-a_i|>\lam\})\geq C(\mu_0,\lam,r_0) r^2, \quad \forall r\geq r_0,\ B(x,r)\Subset \Om.
    \eeqo
\end{lemma}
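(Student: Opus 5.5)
The statement to prove is Lemma \ref{lem: caf-cor}, the vector Caffarelli--C\'ordoba density estimate. I would follow the classical scheme: compare $u$ on $B(x,r)$ with a competitor that is pushed to the well $a_i$ in the interior. This yields a discrete isoperimetric-type differential inequality for the measure $V(r)=\mathcal{L}^2(B(x,r)\cap\{|u-a_i|>\lam\})$, which we then integrate.

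\textbf{Step 1: upper energy bound.} Using minimality against the competitor that equals $a_i$ on $B(x,r-1)$ and interpolates linearly to $u$ on the annulus $B(x,r)\setminus B(x,r-1)$, and using boundedness of $u$ and $\na u$ (standard elliptic estimates for bounded solutions), we get $J(u,B(x,r))\le Cr$. This linear bound alone is not enough for quadratic density growth. We need a refined competitor.

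\textbf{Step 2: refined competitor.} Take a cutoff $\sigma$ that equals $1$ on $B(x,r)$, vanishes outside $B(x,r+1)$, and define $v=a_i+(1-\sigma)(u-a_i)$. Instead of this hard cutoff, it is better to use a projection-type modification: set $v=a_i+\min\{1,\phi\}\frac{u-a_i}{|u-a_i|}\min\{|u-a_i|,\delta\}$ on the set where $u$ is $\delta$-close to $a_i$. Since the Hessian $D^2W(a_i)$ is positive definite, $W$ decreases along rays toward $a_i$ in a $\delta$-neighborhood, so pulling the values closer to $a_i$ does not increase the potential energy. Following Alikakos--Fusco \cite{AF3}, minimality then yields
\beqo
\int_{B(x,r)}\Big(\f12|\na u|^2+W(u)\Big)\,d\bx\le C\int_{B(x,r+1)\setminus B(x,r)}\Big(|\na u|^2+\mathbf 1_{\{|u-a_i|>\delta\}}\Big)\,d\bx .
\eeqo
Note also that $W\ge c(\lam)>0$ on $\{|u-a_i|>\lam\}\cap\{|u-a_j|>\lam,\ j\ne i\}$.

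\textbf{Step 3: isoperimetric step.} Let $A(r)=\mathcal{L}^2(B(x,r)\cap\{|u-a_i|>\delta\})$. The key inequality to establish is
\beqo
A(r)^{1/2}\le C\big(A(r+1)-A(r)\big)+C\int_{B(x,r+1)\setminus B(x,r)}|\na u|^2 .
\eeqo
I would prove it via the coarea formula applied to $|u-a_i|$ between the levels $\delta$ and $\lam$, together with the isoperimetric inequality: the perimeter of $\{|u-a_i|>t\}$ inside $B(x,r)$ is controlled by the energy through the Modica--Mortola inequality $|\na|u-a_i||\,\sqrt{2W}\le \f12|\na u|^2+W$. There is one complication. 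The set where $u$ is near another well $a_j$ is large but has small potential energy, and its relative isoperimetry must be handled through the boundary of $B(x,r)$. This is where the annulus terms on the right come from.

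\textbf{Step 4: iteration and main obstacle.} A standard iteration of this inequality from $r_0$ gives $A(r)\ge cr^2$, and then $V(r)\gtrsim r^2$ by relating the $\delta$- and $\lam$-level sets through the gradient bound. The main obstacle is Step 3: turning energy into measure without the scalar structure. I would follow \cite{AF3} closely and use the a priori bound $\|\na u\|_\infty\le C$ to convert the initial mass $\mu_0$ into a starting density.
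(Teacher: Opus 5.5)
The paper does not prove this lemma; it quotes it from \cite{AF3}. Compared with the Caffarelli--C\'ordoba argument that \cite{AF3} adapts, your outline has a genuine gap in Steps 3--4.

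The isoperimetric/coarea/Modica--Mortola part of Step 3 is the right tool. The trouble is what you feed into it: the hard-cutoff bound of Step 2, which controls the energy in $B(x,r)$ by annular energy plus the annular measure of $\{|u-a_i|>\delta\}$. (That comparison also leaves an annular $W(u)$, or $|u-a_i|^2$, term, which your display omits.) The resulting inequality
\[
A(r)^{1/2}\le C\big(A(r+1)-A(r)\big)+C\int_{B(x,r+1)\setminus B(x,r)}|\na u|^2
\]
is not homogeneous in the bad set: energy on the part of the annulus where $|u-a_i|\le\delta$ is invisible to $A$. Summing it and using Step 1 gives only $\sum_{k\le R}A(k)^{1/2}\le C A(R+1)+CR$. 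This forces quadratic growth once $A$ exceeds a threshold of order $C^2$. It is, however, compatible with $A(r)$ staying of size $\mu_0$ for all $r$ when $\mu_0$ is small; it suffices that the annular energy stay of order one. The gradient bound cannot bridge this, since it turns $\mu_0$ into a density of order $\mu_0$ (or $\lam^2$), not one above the threshold. So your ``standard iteration from $r_0$'' does not produce a constant $C(\mu_0,\lam,r_0)$.

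The missing idea is a competitor whose cost is supported on the bad set and weighted toward $\pa B(x,r)$. In the standard proof you take a radial barrier $\sigma$ with $\Delta\sigma=c^2\sigma$ in $B(x,r)$ and $\sigma\ge\sup|u-a_i|$ on $\pa B(x,r)$, so that $\sigma(y)\lesssim e^{-c(r-|y-x|)}$. You then compare $u$ with $v=a_i+\min\{|u-a_i|,\sigma\}\frac{u-a_i}{|u-a_i|}$. Your Step 2 formula, with $\phi$ unspecified, is defined only on $\{|u-a_i|\le\delta\}$, where it merely rescales $u-a_i$. The projection has to act on all of $\{|u-a_i|>\sigma\}$, including the regions near the other wells.

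Writing $u=a_i+\rho n$, the comparison works as follows:
\begin{itemize}
\item the angular energy $\rho^2|\na n|^2$ only decreases;
\item on $\{\sigma<\rho\le\delta\}$, radial monotonicity of $W$ together with the barrier equation (for $c$ small) makes the comparison favorable;
\item on $\{\rho>\delta\}$ the cost is $O(\sigma)$, hence exponentially small away from $\pa B(x,r)$.
\end{itemize}
This bounds the energy of $u$ on $\{\lam/2<\rho<\lam\}\cap B(x,r-K)$ (for $\lam\le\delta$) by a multiple of $\int_{B(x,r)\cap\{\rho>\delta\}}e^{-c(r-|y-x|)}\,dy$. Your isoperimetric step then gives, schematically, $A(r-K)^{1/2}\le C\int_0^r e^{-c(r-t)}A'(t)\,dt$. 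This integrates to $A(r)\ge c(\mu_0,r_0)\,r^2$ starting from any positive initial mass.

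Finally, the passage between the $\delta$- and $\lam$-levels should use $J(u,B(x,r))\le Cr$ from Step 1 together with $W\ge c(\delta,\lam)>0$ on the intermediate shell, so that the shell has measure $O(r)=o(r^2)$. The gradient bound cannot transfer a lower bound on $\{|u-a_i|>\delta\}$ to the smaller set $\{|u-a_i|>\lam\}$ when $\lam>\delta$.
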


If a blow-down limit $u_0$ has a two-phase profile, we can invoke the following rigidity result \cite{ss2024}  for two-phase minimizer thanks to the fact that 1D heteroclinic connections $U_{ij}$ are unique up to translation. 

\begin{proposition}{\cite[Theorem 1.3]{ss2024}}\label{prop: 2 phase}
Assume $u$ is a bounded, entire minimizing solution of \eqref{EL eq} and $u(r_k\bx)$ converges in $L^1_{loc}$ to $u_0$ along a subsequence $r_k\to\infty$, where   $u_0=a_i\mathbf{1}_{D}+a_j\mathbf{1}_{\mathbb{R}^2\setminus D}$ is a two-phase blow-down limit. Then there exists $h\in\mathbb{R}$ such that 
\begin{equation*}
    u(\bx)=U_{ji}(\bx\cdot \mathbf{n}-h).
\end{equation*}
Here $\mathbf{n}$ is the inward unit normal on $\pa D$. 
\end{proposition}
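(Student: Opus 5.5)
The plan is to show that $u$ has no dependence on the direction $\bt:=\mathbf{n}^\perp$ tangent to $\pa D$. After that, the one-dimensional uniqueness in Lemma \ref{lem: 1d connection} finishes the proof. Let $\sigma:=\int_\BR\big(\f12|U_{ij}'|^2+W(U_{ij})\big)\,dx$ be the heteroclinic energy, which is the same for every pair $i\neq j$ by $D_3$-symmetry. Let $Q_R$ be the square of side $2R$ centered at the origin with sides parallel to $\mathbf{n}$ and $\bt$. Write $\bx=y\,\mathbf{n}+t\,\bt$.

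\textbf{Step 1: localization.} From the $L^1_{loc}$ convergence \eqref{L1 conv to blowdown} together with the density estimate Lemma \ref{lem: caf-cor}, I would upgrade to uniform convergence away from the interface, along the sequence $R=r_k$. Concretely, I would show $|u-a_j|$ is small on $\{y\geq \delta R\}\cap Q_R$ and $|u-a_i|$ is small on $\{y\leq-\delta R\}\cap Q_R$. Standard linearization at the nondegenerate wells then gives exponential convergence to the wells in those regions.

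\textbf{Step 2: matching energy bounds.} For the upper bound, I build a competitor on $Q_R$: inside, take $U_{ij}(y-h)$; in a boundary layer of width $1$, interpolate linearly to $u$. By Step 1, this gives
\begin{equation*}
J(u,Q_R)\le 2\sigma R+o(R).
\end{equation*}
For the lower bound, for each $t$ the slice $y\mapsto u(y\,\mathbf{n}+t\,\bt)$ runs from near $a_i$ to near $a_j$. Its one-dimensional energy $\int\big(\f12|\pa_y u|^2+W(u)\big)\,dy$ is therefore at least $\sigma-o(1)$. Adding $\f12\int|\pa_t u|^2$ then yields
\begin{equation*}
\int_{Q_R}|\pa_{\bt}u|^2\le o(R).
\end{equation*}
I expect this to be the main obstacle: I need this to be $O(1)$, not merely $o(R)$. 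To get $O(1)$, I would replace the $o(R)$ errors by sharper bounds. First, the interface should lie in a strip $\{|y|\le C\}$, or at least be a Lipschitz graph with bounded oscillation; this should follow from the density estimates and an improvement-of-flatness iteration in the spirit of Savin. Second, the interpolation errors should then be $O(1)$ by exponential decay. This gives $\int_{\BR^2}|\pa_{\bt}u|^2<\infty$.

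\textbf{Step 3: killing the tangential derivative.} With $\int_{\BR^2}|\pa_{\bt}u|^2<\infty$, I would use a two-dimensional log-cutoff sliding argument. Let $\varphi_R$ be the logarithmic cutoff equal to $1$ on $B_R$ and $0$ outside $B_{R^2}$, so that $\int|\na\varphi_R|^2\to0$. Consider the competitors $u(\bx+s\varphi_R(\bx)\bt)$ and $u(\bx-s\varphi_R(\bx)\bt)$, which agree with $u$ outside a compact set. Their average energy minus $J(u)$ is bounded by $Cs^2\int|\na\varphi_R|^2\int|\pa_{\bt}u|^2+o(1)$, which tends to zero. On the other hand, if $\pa_{\bt}u\not\equiv0$, the averaged competitor $\f12\big(u(\bx+s\bt)+u(\bx-s\bt)\big)$ on $B_R$ can be lowered by a fixed amount by a strict convexity/unique continuation argument. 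An alternative is to compare with the truncated translate and use that minimizers are strictly stable. Either route contradicts minimality, so $\pa_{\bt}u\equiv0$.

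\textbf{Step 4: conclusion.} Now $u(\bx)=V(\bx\cdot\mathbf{n})$, where $V$ solves the ODE \eqref{ode:connection} with the limits $a_j$ and $a_i$ forced by $u_0$. Minimality on strips forces $V$ to be a minimizing heteroclinic connection. Lemma \ref{lem: 1d connection} and \eqref{formula: 1d connection} then give $u=U_{ji}(\bx\cdot\mathbf{n}-h)$ for some $h\in\BR$.
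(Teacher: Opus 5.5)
The paper does not prove this statement itself. It quotes Sandier--Sternberg \cite[Theorem 1.3]{ss2024}, whose key hypothesis (uniqueness of the heteroclinic connections up to translation) is supplied by Lemma \ref{lem: 1d connection}, and reads off the profile from \eqref{formula: 1d connection}. Your self-contained outline has a genuine gap in Step 3. The warning sign is that uniqueness of $U_{ij}$ enters only in Step 4, after one-dimensionality has already been obtained. Steps 1--3 use nothing about $W$ beyond nondegenerate wells, so they would apply verbatim to the heteroclinic double layers of Alama--Bronsard--Gui and Schatzman \cite{alama1997stationary,schatzman2002asymmetric}, which the paper recalls in its introduction. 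These are entire minimizers with a two-phase blow-down that converge to two distinct heteroclinics as $t\to\pm\infty$. They satisfy $\int_{\BR^2}|\pa_{\bt}u|^2<\infty$, since that finiteness is built into the renormalized energy they minimize, yet they are not one-dimensional. So the implication your Step 3 needs, namely that minimality together with $\int|\pa_{\bt}u|^2<\infty$ forces $\pa_{\bt}u\equiv0$, is false for systems. Concretely, the averaging step fails because $J$ is not convex. At order $s^2$, averaging $w^\pm=u(\cdot\pm s\varphi_R\bt)$ gains $\f{s^2}{2}Q(\varphi_R\pa_{\bt}u)$, where $Q$ is the second variation. Since $\pa_{\bt}u$ is a Jacobi field, this equals $\f{s^2}{2}\int|\na\varphi_R|^2|\pa_{\bt}u|^2\to0$, and the remaining terms have no sign. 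For the same reason minimizers are never strictly stable in the translation direction. The scalar version of your argument uses $\min(u,v)$ and $\max(u,v)$, for which $J(\min)+J(\max)=J(u)+J(v)$, together with the strong maximum principle; neither tool exists for $\BR^2$-valued maps.

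Separately, the upgrade from $o(R)$ to $O(1)$ in Step 2 is only announced, not proved. Savin's improvement of flatness rests on the scalar comparison principle (sliding, Harnack inequalities for viscosity sub/supersolutions), which has no analogue for systems. Showing that the diffuse interface stays within controlled distance of a line is exactly the nontrivial content of the cited theorem; in the triple-junction case the analogous facts required the separate works \cite{geng2025uniqueness,geng2025rigidity}.

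Once such localization is available, you should drop Step 3 and let uniqueness do the work, using the same kind of comparison as in Proposition \ref{prop: 1st order eq}:
\begin{itemize}
\item Slices $y\mapsto u(y\,\bn+t\bt)$ with small one-dimensional excess energy are $H^1$-close to translates $U_{ji}(\cdot-h(t))$ of the \emph{same} profile. This holds because $U_{ji}$ is unique and passing near the third well costs about $2\sigma$.
\item Choose such slices at $t^-\ll0\ll t^+$. On the rectangle between them, take the slowly shifted profile $U_{ji}(y-h(t))$ with $h$ linear, glued to $u$ on the boundary. Its energy is $\sigma(t^+-t^-)+O\big((h(t^+)-h(t^-))^2/(t^+-t^-)\big)+o(1)$.
\item Your slice lower bound gives $J(u)\geq\sigma(t^+-t^-)+\f12\int|\pa_{\bt}u|^2-o(1)$ on the same rectangle, hence $\pa_{\bt}u\equiv0$.
\end{itemize}
This comparison requires $h(t^\pm)=o(\sqrt{|t^\pm|})$. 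That is precisely the localization your Step 1 does not provide, since it only gives $o(|t^\pm|)$.
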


We are primarily interested in the third case, where along a sequence $r_k\to \infty$, $u(r_k\cdot)$ converges in $L^1_{loc}$ to a triple-junction map which connects all three phases $\{a_1,a_2,a_3\}$. The existence of such entire minimizing solution was established independently in \cite{alikakos2024triple} and \cite{ss2024}. The uniqueness of the blow-down limit and the almost 1D symmetry along each leg of the sharp interface were subsequently proved in \cite{geng2025uniqueness} and \cite{geng2025rigidity}. We recall these results below.

\begin{proposition}\label{prop:rigidity for triple junction}
Let $u$ be an entire minimizing solution and suppose that along a sequence $r_k\to\infty$, $u(r_k\bx)$ converges in $L^1_{loc}$ to a triple-junction map $u_0=\sum_{i=1}^3 a_i\mathbf{1}_{\Om_i}$, where $\{\Om_i\}$ partitions $\mathbb{R}^2$ into three sectors, each with opening angle $\frac{2\pi}{3}$ and vertex at the origin. Then the convergence is independent of the sequence $\{r_k\}$, that is,
\begin{equation}\label{pre:L1 conv}
    u(r\bx) \xrightarrow[r\to\infty]{L^1_{loc}} u_0.
\end{equation}
For distinct $i,j\in\{1,2,3\}$, let $\boldsymbol{\tau}_{ij}$ denote the unit tangent vector to the sharp interface ray $\pa \Om_i\cap \pa \Om_j$, and let $\mathbf{n}_{ij}$ denote the unit normal vector of $\pa \Om_i\cap\pa \Om_j$ pointing from $\Om_i$ into $\Om_j$. Then there exists $h_{ij}\in\mathbb{R}$ such that 
\begin{equation}\label{1d asymptotic}
     u(x_1\bt_{ij}+x_2\bn_{ij}) \xrightarrow[x_1\to+\infty]{C^{2,\alpha}(\mathbb{R},\mathbb{R}^2)} U_{ij}(x_2-h_{ij}).
\end{equation}
Moreover, there are constants $C,k>0$, depending on $W,u$, such that 
\begin{equation}\label{exp decay}
    |u(\bx)-a_i|\leq Ce^{-k\,\dist(\bx,\pa \Om_i)}, \quad \forall \bx\in \Om_i, \ i\in\{1,2,3\}.
\end{equation}
\end{proposition}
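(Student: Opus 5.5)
The statement is a recall of \cite{geng2025uniqueness,geng2025rigidity}, so the plan is to check that $W$ meets the hypotheses of those works and then run their argument in outline. Those results assume (P1), bounded minimality of $u$, and that each heteroclinic connection $U_{ij}$ is unique up to translation and nondegenerate. Uniqueness follows from Lemma \ref{lem: 1d connection} together with the $D_3$-symmetry (P2), which gives \eqref{formula: 1d connection}.

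For nondegeneracy I would linearize \eqref{ode:connection 2} at $(s,v)=(0,\f{\sqrt3}{2}\tanh(\f{\sqrt6}{2}x))$. The system decouples at this point.
\begin{itemize}
\item The $s$-equation linearizes to $-\phi''+(4v^2+3)\phi$, which is coercive because $4v^2+3\ge 3$.
\item The $v$-equation linearizes to the scalar Allen--Cahn operator $-\psi''+(12v^2-3)\psi$. Its kernel is spanned by $v'$, and the operator is coercive on the orthogonal complement of $v'$.
\end{itemize}
Hence the kernel of the full linearization is spanned by $U'$.

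Next I would prove the uniqueness of the blow-down \eqref{pre:L1 conv}. First, comparison with a sharp-interface competitor and the Caffarelli--C\'ordoba estimate (Lemma \ref{lem: caf-cor}) give the upper bound $J(u,B_r)\le \sigma\cdot 3r+C$, where $\sigma$ is the heteroclinic energy. A matching lower bound is obtained by slicing transversally to the three legs. The resulting energy bounds confine the diffuse interface $\{\dist(u,\{a_i\})>\lambda\}$ to $o(r)$-neighborhoods of three rays. I would then show that the angle of each leg cannot drift as $r\to\infty$. The approach is an excess-decay (improvement of flatness) argument on dyadic annuli, where nondegeneracy of $U_{ij}$ makes the per-scale excess summable. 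This yields convergence along the whole family $r\to\infty$, not just along subsequences. The exponential decay \eqref{exp decay} then comes from two facts: the interface lies within bounded distance of $\pa\Om_i$, and $D^2W(a_i)>0$. Together these give $\Delta|u-a_i|^2\ge c|u-a_i|^2$ away from the interface, and a barrier comparison in $\Om_i$ finishes it.

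For \eqref{1d asymptotic}, I would translate $u$ along $\bt_{ij}$ by $x_1\to\infty$ and extract $C^{2,\alpha}_{loc}$ limits using elliptic estimates. Each limit is an entire bounded minimizer with a two-phase blow-down. By Proposition \ref{prop: 2 phase}, every limit equals $U_{ij}(x_2-h)$ for some $h$. The main obstacle is that $h$ might depend on the chosen subsequence. I would rule this out by controlling the interface position along the leg: the interface is a graph $x_2=g(x_1)$, and the flatness estimates above give $|g'|$ integrable on the leg. Then $g$ has a limit, which is $h_{ij}$, and convergence holds for the full family $x_1\to+\infty$.
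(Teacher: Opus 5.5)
Your opening step matches how the paper treats this statement. The paper does not prove it; it imports it from \cite{geng2025uniqueness,geng2025rigidity}, whose extra hypothesis beyond (P1) is that each $U_{ij}$ is unique up to translation, and that is supplied by Lemma \ref{lem: 1d connection} together with the $D_3$-symmetry. Your nondegeneracy computation is also correct: the linearization decouples at $s=0$, the $s$-block $-\phi''+(4v^2+3)\phi$ is coercive, and the $v$-block is the scalar Allen--Cahn operator with kernel spanned by $v'$. The paper, though, only cites uniqueness. As a reduction to the cited works, the proposal is fine.

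The self-contained outline you add has a genuine gap, exactly at the step you call the main obstacle. Grant your excess-decay argument. Summable per-scale excess then controls the \emph{tilt} of each leg on dyadic annuli and gives convergence of the leg's direction. That only says the diffuse interface stays within $o(r)$ of a fixed ray. Integrability of $g'$ is a statement of a different order: the interface stays within $O(1)$ of a fixed line. In dyadic terms this amounts to $\sum_k 2^k\theta_k<\infty$ for the tilts $\theta_k$ relative to the limiting direction, and nothing in your outline produces such a rate. Even the sharp bound $\int_{\{\bx\cdot\bt_{ij}>0\}}|\pa_{\bt_{ij}}u|^2\,d\bx<\infty$ from \cite{geng2025rigidity}, used in the proof of Lemma \ref{lem:h sum 0}, falls short. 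Since $\int|\pa_{\bt_{ij}}u|^2\,dx_2\approx|g'|^2\int|U_{ij}'|^2$, it only gives $g'\in L^2$, which is compatible with a drift such as $g(x_1)=\log x_1$.

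The same missing ingredient makes your derivation of \eqref{exp decay} circular. Decay in $\dist(\bx,\pa\Om_i)$, measured from the fixed rays through the origin, needs the interface to lie within bounded distance of $\pa\Om_i$. That is the finite-shift statement you only reach at the very end. With $o(r)$ confinement you only get $|u-a_i|\le Ce^{-k(\dist(\bx,\pa\Om_i)-o(|\bx|))}$.

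One way to pin down $h_{ij}$ is a leg-wise version of the moment identity in Lemma \ref{lem:h sum 0}. Work in coordinates $(x_1,x_2)$ along $(\bt_{ij},\bn_{ij})$. Applying $\mathrm{div}\,T=0$ to the field $X=(\bx\cdot\bn_{ij})\,\bt_{ij}$, together with $\int T_{12}\,dx_2=0$ (Schatzman's identity), gives $\f{d}{dx_1}\int x_2T_{11}\,dx_2=0$. Along any sequence of translates converging to $U_{ij}(\cdot-h)$, this constant equals $-\bigl(\sigma h+\int s\,(\f12|U_{ij}'(s)|^2+W(U_{ij}(s)))\,ds\bigr)$, where $\sigma$ is the heteroclinic energy. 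Hence $h$ cannot depend on the sequence. This route still needs exponential decay measured from the actual diffuse interface, uniformly along the leg, and your outline would have to supply that estimate.
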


\begin{rmk}\label{rmk: exp decay for der}
    Combining \eqref{exp decay} with standard elliptic estimates, we can get similar exponential decay for all derivatives of $u$. More precisely, for any $m\geq 1$, there exists $C_m,k_m>0$ such that 
    \begin{equation*}
        |D^m u(\bx)| \leq C_m e^{-k_m\mathrm{dist}(\bx,\pa\Om_i)},\quad \bx\in \Om_i.
    \end{equation*}
\end{rmk}

\section{An explicit triple-junction solution}\label{sec:u*}

We begin with the explicit form of the heteroclinic connection $U_{32}$. If we put the connection on the $x_2$-axis of $\mathbf{R}^2$ and extend it constantly in the $x_1$-direction, then the formula \eqref{formula: U32} becomes
\begin{equation*}
    U_{32}(\bx) = \frac{e^{\sqrt{2}a_2\cdot  \bx}}{e^{\sqrt{2}a_2\cdot  \bx}+e^{\sqrt{2}a_3\cdot  \bx}}\ a_2+ \frac{e^{\sqrt{2}a_3\cdot  \bx}}{e^{\sqrt{2}a_2\cdot  \bx}+e^{\sqrt{2}a_3\cdot  \bx}} \ a_3.
\end{equation*}
Motivated by this, we consider the following map
\begin{equation}
    \label{map:u*}
    u_*(\bx):= \frac{e^{\sqrt{2}a_1\cdot  \bx}}{\sum\limits_{i=1}^3 e^{\sqrt{2}a_i\cdot \bx}}\ a_1+\frac{e^{\sqrt{2}a_2\cdot  \bx}}{\sum\limits_{i=1}^3 e^{\sqrt{2}a_i\cdot \bx}}\ a_2+\frac{e^{\sqrt{2}a_3\cdot  \bx}}{\sum\limits_{i=1}^3 e^{\sqrt{2}a_i\cdot \bx}}\ a_3. 
\end{equation}
We shall prove that $u_*\in C^\infty(\BR^2;\BR^2)$ is a solution to \eqref{EL eq} and has the triple-junction asymptotics described in Proposition \ref{prop:rigidity for triple junction}. We first introduce the following notation which will be used throughout the rest of the paper.
\begin{align}
\label{def Omi}&\Om_i:=\{(r\cos\theta,r\sin\theta):\  0<r<\infty,\ \frac{(2i-3)\pi}{3}<\theta<\frac{(2i-1)\pi}{3}\},\quad i\in\{1,2,3\}. \\ 
   \nonumber &\qquad\qquad\qquad\bt_{12}:= (\f12,\f{\sqrt3}{2}),\quad \bn_{12}=(-\f{\sqrt3}{2},\f12),\\
   \nonumber &\qquad\qquad\qquad\bt_{23}:= (-1,0),\quad \bn_{23}=(0,-1),\\
   \nonumber &\qquad\qquad\qquad\bt_{31}:= (\f12,-\f{\sqrt3}{2}),\quad \bn_{31}=(\f{\sqrt3}{2},\f12).
\end{align}
By construction, $\bt_{ij}$ represents the direction of $\pa \Om_i\cap\pa \Om_j$ and $\bn_{ij}$ is the unit normal on $\pa \Om_i\cap\pa \Om_j$ pointing from $\Om_i$ into $\Om_j$.  

\begin{proposition}\label{prop: u* sol}
    The map $u_*$ solves \eqref{EL eq}. Moreover, it is $D_3$-equivariant: 
    \begin{equation*}
        u_*(g\, \bx)=g\, u_*(\bx),\quad \forall g\in D_3,\ \bx\in\BR^2.
    \end{equation*}
    It also satisfies
    \begin{equation}\label{L1 conv in prop u* sol}
        u_*(r\bx)\xrightarrow[r\to\infty]{L_{loc}^1} \sum_{i=1}^3 a_i\mathbf{1}_{\Om_i},    
    \end{equation}
    and there are constants $C,k>0$ such that 
    \begin{equation}\label{exp close in prop u* sol}
        |u_*(\bx)-a_i|\leq Ce^{-k\,\dist(\bx,\pa \Om_i)}, \quad \forall \bx\in \Om_i, \ i\in\{1,2,3\}.
    \end{equation}
\end{proposition}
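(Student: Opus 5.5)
The plan is to write $u_*=\sum_{i=1}^3 p_i a_i$ with the softmax weights
\begin{equation*}
p_i(\bx)=\frac{e^{\sqrt2 a_i\cdot\bx}}{\sum_{j=1}^3 e^{\sqrt2 a_j\cdot\bx}},\qquad p_i>0,\quad \sum_{i=1}^3 p_i=1,
\end{equation*}
and to verify all three claims directly from this representation. Smoothness is immediate because the denominator never vanishes. The basic identity is $\na p_i=\sqrt2\,p_i(a_i-u_*)$, which follows from differentiating the quotient and using $\sum_j p_j a_j=u_*$. It gives
\begin{equation*}
\na u_*=\sqrt2\Big(\sum_i p_i\,a_i\otimes a_i-u_*\otimes u_*\Big).
\end{equation*}
Since $|a_i|=1$, taking the trace gives $\operatorname{div}$-type quantity $\sum_k\pa_k(u_*)_k=\sqrt2(1-|u_*|^2)$.

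For the Euler--Lagrange equation, I will compute $\Delta p_i=\na\cdot(\sqrt2 p_i(a_i-u_*))$. Using the formulas above, this becomes
\begin{equation*}
\Delta p_i=2p_i|a_i-u_*|^2-2p_i(1-|u_*|^2)=4p_i\big(|u_*|^2-a_i\cdot u_*\big).
\end{equation*}
Hence
\begin{equation*}
\Delta u_*=4|u_*|^2u_*-4\sum_i p_i(a_i\cdot u_*)a_i.
\end{equation*}
Comparing with \eqref{eq u1}--\eqref{eq u2}, it remains to check that
\begin{equation*}
-4\sum_i p_i(a_i\cdot u_*)a_i=(2u_2^2-2u_1^2,\,4u_1u_2)-2u_*.
\end{equation*}
This is the step I expect to be the main obstacle, though it is purely algebraic.

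To handle it, I will pass to complex notation. Identify $a_i$ with the cube roots of unity $\omega_i$, so $\bar\omega_i=\omega_i^2$, and write $z=u_1+iu_2=\sum p_i\omega_i$. The quadratic term $(2u_2^2-2u_1^2,4u_1u_2)$ is then $-2\bar z^2$. Expanding, and using $\omega_i^4=\omega_i$ and $(\omega_i\omega_j)^2=\omega_k$ for $\{i,j,k\}=\{1,2,3\}$, gives
\begin{equation*}
\bar z^2=\sum_i p_i^2\omega_i+2\sum_{i<j}p_ip_j\omega_k.
\end{equation*}
On the other side, $a_i\cdot u_*=\sum_j p_j\,a_i\cdot a_j=p_i-\tfrac12(1-p_i)$, because $a_i\cdot a_j=-\tfrac12$ for $i\neq j$. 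Both sides are then explicit quadratic polynomials in $(p_1,p_2,p_3)$ with values in $\mathrm{span}\{\omega_i\}$. After homogenizing with $\sum p_i=1$ and $\sum\omega_i=0$, the identity reduces to matching the coefficients of $p_i^2$ and $p_ip_j$, which is a finite check.

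Equivariance follows from $a_i\cdot g\bx=(g^{-1}a_i)\cdot\bx$ and the fact that every $g\in D_3$ permutes $\{a_1,a_2,a_3\}$. Thus $p_i(g\bx)=p_{\sigma(i)}(\bx)$, where $g^{-1}a_i=a_{\sigma(i)}$, and so
\begin{equation*}
u_*(g\bx)=\sum_i p_{\sigma(i)}(\bx)\,a_i=g\sum_i p_{\sigma(i)}(\bx)\,a_{\sigma(i)}=g\,u_*(\bx).
\end{equation*}

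For the asymptotics, take $\bx\in\Om_1$ and $j\neq1$. The vector $a_1-a_j$ has length $\sqrt3$ and is orthogonal to the ray $\pa\Om_1\cap\pa\Om_j$, pointing into $\Om_1$. Hence $(a_1-a_j)\cdot\bx\ge\sqrt3\,\dist(\bx,\pa\Om_1)$. It follows that $p_j\le e^{-\sqrt2(a_1-a_j)\cdot\bx}\le e^{-\sqrt6\,\dist(\bx,\pa\Om_1)}$, and therefore
\begin{equation*}
|u_*-a_1|\le\sum_{j\neq1}p_j|a_j-a_1|\le 2\sqrt3\,e^{-\sqrt6\,\dist(\bx,\pa\Om_1)}.
\end{equation*}
By equivariance the same bound holds in $\Om_2$ and $\Om_3$. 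This gives \eqref{exp close in prop u* sol} with $k=\sqrt6$.

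Finally, $u_*(r\bx)\to a_i$ pointwise on each $\Om_i$, since $\dist(r\bx,\pa\Om_i)=r\,\dist(\bx,\pa\Om_i)\to\infty$. Moreover $|u_*|\le1$ everywhere. Dominated convergence on compact sets then yields \eqref{L1 conv in prop u* sol}.
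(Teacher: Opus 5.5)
Your proposal is correct. The equivariance step is the same as the paper's. Your asymptotic estimate is slightly cleaner: the bound $(a_1-a_j)\cdot\bx\ge\sqrt3\,\dist(\bx,\pa\Om_1)$ holds on all of $\Om_1$ because $\Om_1=\{(a_1-a_2)\cdot\bx>0,\ (a_1-a_3)\cdot\bx>0\}$. You also justify the $L^1_{loc}$ convergence by dominated convergence, which the paper leaves implicit.

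The genuine difference is in verifying the Euler--Lagrange equation. You work at second order. From $\na p_i=\sqrt2\,p_i(a_i-u_*)$ you get $\Delta p_i=4p_i(|u_*|^2-a_i\cdot u_*)$, and you then need a quadratic identity in $(p_1,p_2,p_3)$. That identity does hold: after homogenizing, both sides equal $-4\sum_i p_i^2\omega_i-2\sum_{i<j}p_ip_j\omega_k$, with $k$ the remaining index.

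The paper instead stays at first order. By \eqref{ai otimes}, $a_i\otimes a_i$ is affine in $a_i$; this is the same cube-root-of-unity fact $\bar\omega_i^{\,2}=\omega_i$ that drives your check. Hence $\sum_i p_i\,a_i\otimes a_i$ is affine in $u_*$. Your own formula for $\na u_*$ then closes into the autonomous system $\na u_*=\mathcal{N}(u_*)$. $\Delta u_*$ follows by differentiating a polynomial in $u_*$ and substituting $\na u_*=\mathcal{N}(u_*)$ once more.

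The paper's route is shorter. More importantly, it isolates $\na u_*=\mathcal{N}(u_*)$, which is the key input for the calibration argument in Proposition \ref{prop: 1st order eq}, the minimality of $u_*$ in Remark \ref{rmk:u* min}, and Lemma \ref{lem: convex hull}. Your route proves the proposition as stated and yields the explicit formula for $\Delta p_i$. However, you would have to go back and extract the first-order system separately before the rest of the paper's argument could proceed.
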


\begin{proof}
    Define 
    \begin{equation*}
        S(\bx):= \sum\limits_{i=1}^3 e^{\sqrt{2}a_i\cdot \bx}, \quad \lambda_i(\bx):= \frac{e^{\sqrt{2}a_i\cdot\bx}}{S(\bx)},\quad i=1,2,3.
    \end{equation*}
    Then $\lambda_i(\bx)>0$, $\sum_{i=1}^3\lambda_i(\bx)=1$ and $u_*(\bx)=\sum_{i=1}^3 \lambda_i(\bx)a_i$. We divide the proof into three steps.

    \begin{step}
        $u_*$ satisfies the $D_3$-symmetry. For any $g\in D_3$, it holds that 
        \begin{align*}
            u_*(g\, \bx)&=\sum_{i=1}^3 \frac{e^{\sqrt2 a_i\cdot g\bx}}{S(\bx)}a_i\\
            =\sum_{i=1}^3 & \frac{e^{\sqrt2(g^{-1} a_i)\cdot x}}{S(\bx)} a_i= \sum_{i=1}^3 \f{e^{\sqrt2 a_i\cdot \bx}}{S(\bx)} g\, a_i=g\, u_*(\bx).
        \end{align*}
    \end{step}

    \begin{step}
        $u_*$ solves \eqref{EL eq}. Using the elementary identities
        \begin{equation*}
            \nabla (e^{\sqrt2 a_i\cdot \bx})=\sqrt2 a_i e^{\sqrt2 a_i\cdot \bx},\quad \nabla S(\bx)=\sqrt{2}S(\bx)u_*(\bx),
        \end{equation*}
        we get
        \begin{align*}
            \nabla u_*& = \sum_{i=1}^3 a_i \otimes \frac{\sqrt2 a_i e^{\sqrt2 a_i\cdot \bx} S(\bx)-\sqrt{2}S(\bx)u_*(\bx) e^{\sqrt2 a_i\cdot\bx}}{S(\bx)^2}\\
            &=\sqrt2\left( \sum_{i=1}^3 \lambda_i(\bx) a_i\otimes a_i -u_*(\bx)\otimes u_*(\bx)  \right).
        \end{align*}
        By \eqref{coord: ai}, we have 
        \begin{equation}\label{ai otimes}
            a_i\otimes a_i= \frac{1}{2}\begin{pmatrix}
                1+a_i^1 & -a_i^2\\
                -a_i^2 & 1-a_i^1
            \end{pmatrix},\quad \text{where } a_i=(a_i^1,a_i^2),\ i\in\{1,2,3\}.
        \end{equation}
        Therefore, writing $u_*=(u_1,u_2)$, we have 
        \begin{equation}\label{nabla u}
            \nabla u_*= \sqrt{2} \begin{pmatrix}
                \f{1+u_1}{2}-u_1^2 & -\f{u_2}{2}-u_1u_2\\
                -\f{u_2}{2}-u_1u_2 & \f{1-u_1}{2}-u_2^2
            \end{pmatrix}. 
        \end{equation}
    Differentiating $Du_*$ gives
        \begin{align*}
            \Delta u_1 &= \pa_1\left(\sqrt{2}(\f{1+u_1}{2}-u_1^2)\right)+\pa_2\left( \sqrt{2}(-\frac{u_2}{2}-u_1u_2) \right)\\
            &=\frac{\sqrt2}{2} \pa_1u_1-2\sqrt2 u_1\pa_1u_1-\frac{\sqrt2}{2} \pa_2u_2-\sqrt2u_1\pa_2u_2-\sqrt2 u_2\pa_2u_1\\
            &=4u_1^3+4u_1u_2^2+2u_2^2-2u_1^2-2u_1;
        \end{align*}
        \begin{align*}
            \Delta u_2&= \pa_1\left( \sqrt2(-\frac{u_2}{2}-u_1u_2)\right)+\pa_2\left( \sqrt2(\frac{1-u_1}{2}-u_2^2) \right)\\
            &=-\frac{\sqrt2}{2} \pa_1 u_2-\sqrt2 u_1\pa_1u_2-\sqrt2u_2\pa_1u_1-\frac{\sqrt2}{2} \pa_2u_1-2\sqrt2 u_2\pa_2 u_2\\
            &=4u_2^3+4u_2u_1^2+4u_1u_2-2u_2.
        \end{align*}
    Thus $u_1,u_2$ satisfy \eqref{eq u1} and \eqref{eq u2} respectively, which is equivalent to saying that $u_*$ solves \eqref{EL eq}.    
    \end{step}
    \begin{step}
        To prove \eqref{L1 conv in prop u* sol} and \eqref{exp close in prop u* sol}, we need to demonstrate that in each $\Om_i$, $u_*$ converges to the corresponding phase $a_i$ with an exponential rate. By the $D_3$-equivariance of $u_*$, it suffices to show for $\bx=(r\cos\theta,r\sin\theta)$, where $\theta \in (0,\frac{\pi}{3})$,
        \begin{equation*}
            |u_*(\bx)-a_1|\leq Ce^{-k r \sin(\frac{\pi}{3}-\theta)}. 
        \end{equation*}
        Indeed, when $\theta\in (0,\frac{\pi}{3})$,
        \begin{align*}
            &\quad\bx\cdot a_1>\bx\cdot a_2>\bx\cdot a_3,\\
            &\bx\cdot (a_2-a_1)=-\sqrt{3} r\sin(\frac{\pi}{3}-\theta).
        \end{align*}
        By definition we have 
        \begin{align*}
            |u_*(\bx)-a_1|&\leq |\lambda_2(\bx)(a_2-a_1)|+|\lambda_3(\bx)(a_3-a_1)|\\
            &\leq 2\sqrt{3} |\lambda_2|\leq 2\sqrt{3} e^{\sqrt{2}\, \bx\cdot(a_2-a_1)}\leq 2\sqrt3 e^{-\sqrt6\, r\sin(\frac{\pi}{3}-\theta)}.
        \end{align*}
        This completes the proof.
    \end{step}
\end{proof}

\section{Proof of Theorem \ref{main thm}}\label{sec:proof}

Throughout this section, we assume $u$ is an entire minimizing solution with triple-junction asymptotics at infinity as described in Proposition \ref{prop:rigidity for triple junction}. The objective is to show $u=u_*$, up to orthogonal transformation and translation. 

After an orthogonal change of coordinates if necessary, we may assume without loss of generality that 
\begin{equation}\label{L1 conv triple}
    u(r\bx)\xrightarrow[r\to\infty]{L^1_{loc}} \sum_{i=1}^3 a_i\mathbf{1}_{\Om_i},\quad \Om_i\text{ given by \eqref{def Omi}}.
\end{equation}

According to Proposition \ref{prop:rigidity for triple junction}, for $(i,j)\in\{(1,2),(2,3),(3,1)\}$, there exists $h_{ij}\in \mathbb{R}$ such that  
\begin{equation}\label{1d symmetry one leg}
    u(x_1\bt_{ij}+x_2\bn_{ij})\xrightarrow[x_1\to\infty]{C^{2,\alpha}} U_{ij}(x_2-h_{ij}), \quad \forall \alpha \in(0,1),
\end{equation}
where $U_{ij}$ is given by \eqref{formula: 1d connection}. We begin by showing the following compatibility condition for $h_{ij}$.

\begin{lemma}\label{lem:h sum 0}
    $h_{12}+h_{23}+h_{31}=0$.
\end{lemma}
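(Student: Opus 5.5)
The plan is to read off $h_{12}+h_{23}+h_{31}=0$ from a balance law for the stress tensor
\begin{equation*}
T:=\nabla u^{T}\nabla u-\Big(\tfrac12|\nabla u|^2+W(u)\Big)I .
\end{equation*}
This tensor is symmetric and, since $u$ solves \eqref{EL eq}, divergence free. For a smooth vector field $X$ we then have $\mathrm{div}(TX)=T:\nabla X$. Take the infinitesimal rotation field $X(\bx)=\bx^\perp$ (rotation by $\pi/2$). Its gradient is antisymmetric, so $T:\nabla X=0$. Integrating over $B_R$ gives the first-moment (torque) identity
\begin{equation*}
\int_{\pa B_R} (T\,\bx^\perp)\cdot \nu \, d\mathcal{H}^1=0\qquad\text{for every } R>0 .
\end{equation*}
I will let $R\to\infty$ and compute the limit of the left-hand side leg by leg.

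\emph{Localization.} Fix a large $L$. By \eqref{exp decay} and Remark \ref{rmk: exp decay for der}, $|T(\bx)|\le Ce^{-k\,\dist(\bx,\pa\Om_i)}$ in $\Om_i$, since $W(u)\le C|u-a_i|^2$ near $a_i$. Since $|\bx^\perp|=R$, I would control the contribution of $\pa B_R$ away from the three rays more carefully. Parametrize a neighborhood of the ray $\pa\Om_i\cap\pa\Om_j$ on $\pa B_R$ by $t$, the signed distance to the ray in direction $\bn_{ij}$. At such a point $\bx^\perp=\sqrt{R^2-t^2}\,\bn_{ij}-t\,\bt_{ij}$ (note $\bn_{ij}=\bt_{ij}^\perp$), and $\nu=\bx/R$. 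The piece of the integrand with the large factor $R$ is $R\,(T\bn_{ij})\cdot\nu$. I plan to show it is negligible by using equipartition on the leg: for the limiting one-dimensional profile, $|U'|^2/2=W(U)$, so $T\bn\cdot\bn=0$. Hence $(T\bn_{ij})\cdot\bt_{ij}$ tends to $0$ as well, because the limit tensor is $-|U_{ij}'|^2\bt\otimes\bt$.

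\emph{Main obstacle.} The $O(R)$ weight is where I expect the real difficulty. I need a quantitative rate in \eqref{1d symmetry one leg}, for instance exponential in $x_1$, so that $R\,|T\bn_{ij}|$ tends to $0$ on $|t|\le R^{1/2}$, say. If only qualitative convergence is available, I would instead replace $\pa B_R$ by the boundary of a large equilateral triangle whose sides are orthogonal to the three rays. On such a side the normal $\nu$ is exactly $\bt_{ij}$ along the leg. Then $(T\bx^\perp)\cdot\bt_{ij}=\sqrt{\cdot}\,(T\bn_{ij})\cdot\bt_{ij}-t\,(T\bt_{ij})\cdot\bt_{ij}$, and the first term can be handled via the conservation law $\pa_{\bt}(T\bt\cdot\bn)$ obtained by integrating $\mathrm{div}T=0$ along strips. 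In this formulation the $R$-weighted term becomes the total transverse momentum flux through a cross-section, which vanishes in the limit. This is the step I would work hardest on.

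\emph{Computing the limit.} Once the $O(R)$ term is disposed of, the surviving term on the $ij$ leg is
\begin{equation*}
\int -t\,(T\bt_{ij})\cdot\bt_{ij}\,dt\;\longrightarrow\;\int_{\BR} t\,|U_{ij}'(t-h_{ij})|^2\,dt \;=\; h_{ij}\,\sigma,\qquad \sigma:=\int_\BR |U_{ij}'|^2 .
\end{equation*}
Here I use that $|U_{ij}'|^2$ is even, which follows from the explicit formula \eqref{formula: 1d connection}, and that $\sigma$ is the same for all pairs by the $D_3$-symmetry. Passing to the limit uses the $C^{2,\alpha}$ convergence on $|t|\le L$ together with the exponential tails $|t|e^{-k|t|}$, and then $L\to\infty$. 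Summing over the three legs gives $\sigma(h_{12}+h_{23}+h_{31})=0$, and since $\sigma>0$ the lemma follows.
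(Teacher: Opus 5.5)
Your final route is the paper's proof: integrate $\mathrm{div}(TX)=0$ with $X=\bx^\perp$ over the equilateral triangle $\mathcal{T}_R$ whose sides are orthogonal to the legs. On each side the weight in front of $\pa_{\bt}u\cdot\pa_{\bn}u$ is exactly the constant $R$ (not $\sqrt{R^2-t^2}$), so your strip conservation law, which is the Schatzman full-line identity the paper cites, makes the full-line flux exactly zero, truncation to $|s|<\sqrt3R$ costs only $O(Re^{-kR})$, and the $s$-weighted term tends to $h_{ij}\sigma$. You use $\pa_{\bt}u\to0$ locally uniformly along each leg without justifying it; the paper gets it from $\int_{\{\bx\cdot\bt_{ij}>0\}}|\pa_{\bt_{ij}}u|^2<\infty$, and it also follows from the leg convergence plus uniform $C^3$ bounds by a compactness argument.
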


\begin{proof}
    We define the stress-energy tensor by
    \begin{equation}\label{def:str tensor}
        T_{\alpha\beta}=\pa_\alpha u\cdot \pa_\beta u-\delta_{\alpha\beta}\left(\f12|\na u|^2+W(u)\right), \quad \alpha,\beta\in\{1,2\}.
    \end{equation}
    Using the equation \eqref{EL eq}, we immediately get $\mathrm{div}(T)=0$, i.e. $\pa_\alpha T_{\alpha\beta}=0$. Let $X(\bx)=(-x_2,x_1)$. Since $T$ is symmetric and $\na X$ is antisymmetric, we have that 
    \begin{equation*}
        \mathrm{div}(TX)=\mathrm{div} T\cdot X+T:\na X=0.
    \end{equation*}
    Take $R\gg 1$, define the equilateral triangle
    \begin{equation}
        \mathcal{T}_R=\bigcap\limits_{(i,j)=(1,2),(2,3),(3,1)}\{\bx\cdot \bt_{ij}< R\}.
    \end{equation}
Its boundary consists of three line segments 
    \begin{equation*}
        l_{ij}=\{R\bt_{ij}+s\bn_{ij}: |s|<\sqrt3 R\},\quad (i,j)\in\{(1,2),(2,3),(3,1)\}.
    \end{equation*}
The outward unit normal to $l_{ij}$ is $\bt_{ij}$. Also, along $l_{ij}$,
\begin{equation*}
    X(R\bt_{ij}+s\bn_{ij})=R\bn_{ij}-s\bt_{ij}.
\end{equation*}

    We have 
    \begin{equation}\label{tensor comp}
    \begin{split}
        0&=\int_{\pa\mathcal{T}_R} \nu\cdot (TX)\,d\mathcal{H}^1\\
        &= \sum_{(i,j)}\int_{l_{ij}}\bt_{ij}^T TX\,d\mathcal{H}^1\\
        &= \sum_{(i,j)}\int_{-\sqrt3 R}^{\sqrt3 R} \bt_{ij}^T T (R\bn_{ij}-s\bt_{ij})\,ds\\
        &=\sum_{(i,j)}\Bigg[R\int_{-\sqrt3 R}^{\sqrt3 R} \bt_{ij}^T T \bn_{ij}\,ds-  \int_{-\sqrt3 R}^{\sqrt3 R}s \bt_{ij}^T T \bt_{ij}\,ds\Bigg],
        \end{split}
    \end{equation}
where the sum is taken over $(i,j)\in\{(1,2),(2,3),(3,1)\}$. 

    Set
    \begin{equation*}
        I^1_{ij}(R):= R\int_{-\sqrt3 R}^{\sqrt{3}R} \bt_{ij}^T T \bn_{ij}\,ds,\quad I^2_{ij}(R)= \int_{-\sqrt3 R}^{\sqrt3 R}s \bt_{ij}^T T \bt_{ij}\,ds.
    \end{equation*}
    We first estimate $I_{ij}^1(R)$. Since $\bt_{ij}\cdot \bn_{ij}=0$, we have  
    \begin{align*}
        I^1_{ij}(R)=R\int_{-\infty}^\infty \pa_{\bt_{ij}} u\cdot \pa_{\bn_{ij}} u\,ds- R\int_{|s|>\sqrt 3R} \pa_{\bt_{ij}} u\cdot \pa_{\bn_{ij}} u\,ds=-R\int_{|s|>\sqrt 3R} \pa_{\bt_{ij}} u\cdot \pa_{\bn_{ij}} u\,ds.
    \end{align*}
    Here we have utilized the important relation
    \begin{equation*}
        \int_{-\infty}^\infty \pa_{\bt_{ij}} u(R\bt_{ij}+s\bn_{ij})\cdot \pa_{\bn_{ij}} u(R\bt_{ij}+s\bn_{ij})\,ds=0,
    \end{equation*}
    which was established in \cite[Lemma 8.2]{schatzman2002asymmetric}. For $R$ sufficiently large, when $|s|>\sqrt{3}R$, $|\nabla u|$ can be controlled by $O(e^{-k|s|})$, therefore we obtain
    \begin{equation*}
        I^1_{ij}(R)\sim O(Re^{-kR})\to 0 \ \text{ as } R\to\infty.
    \end{equation*}

    For $I^2_{ij}(R)$, direct calculation gives
    \begin{equation*}
        I^2_{ij}(R)=\int_{-\sqrt3 R}^{\sqrt3 R} s \left( |\pa_{\bt_{ij}} u|^2-\f12|\na u|^2-W(u) \right)\,ds.
    \end{equation*}
    Fix $0<\e\ll 1$. The exponential decay \eqref{exp decay} and the similar estimate for $|\na u|$ (see Remark \ref{rmk: exp decay for der}) yield the existence of a constant $K(\e)>0$ such that, uniformly for all sufficiently large $R$,
    \begin{equation}\label{I2 est 1}
        \int_{|s|>K(\e)} |s|\left(\f12|\na u(R\bt_{ij}+s\bn_{ij})|^2+W(u(R\bt_{ij}+s\bn_{ij}))\right)\,ds < \frac{\e}{2}. 
    \end{equation}
    By increasing $K$ if necessary and using the exponential convergence of the heteroclinic connection at infinity, we may also ensure that 
    \begin{equation}\label{I2 est 2}
        \int_{|s|>K(\e)} |s|\left(\f12|U_{ij}'(s-h_{ij})|^2+W(U_{ij}(s-h_{ij}))\right)\,ds < \frac{\e}{2}.
    \end{equation}
    
    Keeping $K(\e)$ fixed, \eqref{1d asymptotic} gives 
    \begin{equation}\label{I2 est 3}
    \begin{split}
        &\lim\limits_{R\to\infty}\int_{-K(\e)}^{K(\e)}  s\left( \f12|\pa_{\bn_{ij}} u(R\bt_{ij}+s\bn_{ij})|^2 +W(u(R\bt_{ij}+s\bn_{ij})) \right)\,ds\\
         &\qquad =\int_{-K(\e)}^{K(\e)} s\left(\f12|U_{ij}'(s-h_{ij})|^2+W(U_{ij}(s-h_{ij}))\right)\,ds.
    \end{split}
    \end{equation}

    Moreover, we invoke the following estimate from \cite[Lemma 3.8]{geng2025rigidity}:
    \begin{equation*}
        \int_{\{\bx\cdot \bt_{ij}>0\}} |\pa_{\bt_{ij}} u|^2 \,d\bx <\infty.
    \end{equation*}
    This estimate, together with the uniform bound on $|D^2 u|$, implies that $|\pa_{\bt_{ij}} u(R\bt_{ij}+s\bn_{ij})|$ converges to $0$ uniformly on $s\in[-K(\e),K(\e)]$ as $R$ tends to $\infty$. Combining this with \eqref{I2 est 1},  \eqref{I2 est 2} and \eqref{I2 est 3} implies the existence of $R(\e)$ such that for all $R>R(\e)$, 
    \begin{equation*}
        \bigg| I^2_{ij}(R)+ \int_{-\infty}^{\infty} s\left(\f12|U_{ij}'(s-h_{ij})|^2+W(U_{ij}(s-h_{ij}))\right)\,ds \bigg| <\e.
    \end{equation*}
    Since $\e$ is arbitrarily small, we arrive at
    \begin{equation*}
        \lim\limits_{R\to\infty} I^2_{ij}(R) =-\int_{-\infty}^{\infty} s\left(\f12|U_{ij}'(s-h_{ij})|^2+W(U_{ij}(s-h_{ij}))\right)\,ds .
    \end{equation*}
    Combining with \eqref{tensor comp}, we obtain
    \begin{equation*}
        \sum\limits_{(i,j)} \int_{-\infty}^{\infty} s\left(\f12|U_{ij}(s-h_{ij})'|^2+W(U_{ij}(s-h_{ij}))\right)\,ds =0,
    \end{equation*}
    which by symmetry directly implies
    \begin{equation*}
        (h_{12}+h_{23}+h_{31})\int_\mathbb{R} \left(\f12|U_{ij}'|^2+W(U_{ij})\right)\,ds=0 \Rightarrow h_{12}+h_{23}+h_{31}=0.
    \end{equation*}
    The proof is complete.
    
    \end{proof}

\begin{rmk}\label{rmk: z}
    Since $h_{12}+h_{23}+h_{31}=0$, there exists a unique $\mathbf{z}\in \BR^2$ such that 
    \begin{equation*}
        h_{12}=\bn_{12}\cdot \mathbf{z},\quad h_{23}=\bn_{23}\cdot \mathbf{z},\quad h_{31}=\bn_{31}\cdot \mathbf{z}.
    \end{equation*}
    Moreover, it is easy to verify from Proposition \ref{prop: u* sol} and the definition of $u_*$ \eqref{map:u*} that $u_*(\bx-\mathbf{z})$ satisfies \eqref{1d symmetry one leg} with the same $h_{ij}$. 
\end{rmk}

For $u=(u_1,u_2)$, we define the $2\times 2$ matrix field
\begin{equation}
    \label{def of N} \mathcal{N}(u):=\sqrt{2} \begin{pmatrix}
        \frac{1+u_1}{2}-u_1^2 & -\frac{u_2}{2}-u_1u_2\\
        -\frac{u_2}{2}-u_1u_2 & \frac{1-u_1}{2}-u_2^2
    \end{pmatrix}.
\end{equation}
In \eqref{nabla u} we showed that the explicit solution $u_*$ satisfies the first-order system $\nabla u_*=\mathcal{N}(u_*)$. We now extend this relation to all minimizing triple-junction solutions $u$. 

\begin{proposition}\label{prop: 1st order eq}
    Let $u$ be an entire minimizing solution of \eqref{EL eq} satisfying \eqref{L1 conv triple} and \eqref{1d symmetry one leg}. Then $u$ solves
    \begin{equation}\label{1st order eq}
        \nabla u(\bx)= \mathcal{N}(u(\bx))\quad \text{in } \mathbb{R}^2.
    \end{equation}
\end{proposition}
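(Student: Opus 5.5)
The plan is a calibration argument on large equilateral triangles, comparing $u$ with the translated explicit solution $u_*(\cdot-\mathbf{z})$, where $\mathbf{z}$ is the vector from Remark \ref{rmk: z}.

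\textbf{Step 1: a pointwise calibration identity.} I will first prove an algebraic identity valid for every $C^1$ map $v$:
\begin{equation*}
\frac12|\nabla v|^2+W(v)=\frac13|\nabla v-\mathcal{N}(v)|^2+\mathcal{R}(v,\nabla v)+\mathrm{div}\,G(v,\nabla v),
\end{equation*}
with $\mathcal{R}\ge 0$ and $\mathcal{R}=0$ whenever $\nabla v=\mathcal{N}(v)$. Here $\mathrm{div}\,G$ is a null Lagrangian: a combination of divergences $\mathrm{div}(F(v))$ and Jacobian-type terms $\phi(v)\det\nabla v$. Its integral over a Lipschitz domain therefore depends only on $v$ and its tangential derivative on the boundary.

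The starting point is the decomposition
\begin{equation*}
\tfrac12|\nabla v|^2+W=\tfrac12|\nabla v-\mathcal{N}|^2+\nabla v:\mathcal{N}(v)+\tfrac13\det\mathcal{N}(v),
\end{equation*}
which uses $W=\tfrac12|\mathcal{N}|^2+\tfrac13\det\mathcal{N}$. This last relation is a direct polynomial check from \eqref{def of N} and \eqref{potential specific}.

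The term $\nabla v:\mathcal{N}(v)$ is not an exact divergence, because $\partial_{u_1}\mathcal{N}_{21}\neq\partial_{u_2}\mathcal{N}_{11}$. I will absorb the non-exact part, together with $\tfrac13\det\mathcal{N}$, into a multiple of $\det(\nabla v-\mathcal{N})$ plus Jacobians. The remaining quadratic form in $\nabla v-\mathcal{N}$ is then shown to dominate $\tfrac13|\nabla v-\mathcal{N}|^2$, using $|A|^2\ge 2|\det A|$ for the traceless/antisymmetric parts.

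This algebra is routine but delicate. It is exactly the content behind \eqref{intro:cal}, and I would carry it out by brute-force expansion in $u_1,u_2$.

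\textbf{Step 2: boundary terms agree asymptotically.} Let $w:=u_*(\cdot-\mathbf{z})$. By Proposition \ref{prop: u* sol}, \eqref{nabla u} and Remark \ref{rmk: z}, we have $\nabla w=\mathcal{N}(w)$, so $\mathcal{R}=0$ for $w$. Moreover $w$ satisfies \eqref{1d symmetry one leg} with the same shifts $h_{ij}$ as $u$.

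On $\partial\mathcal{T}_R$ there are two regimes. Away from the three points $R\bt_{ij}+s\bn_{ij}$ with $|s|\le K$, both $u$ and $w$ are exponentially close to the same wells, together with their derivatives (by \eqref{exp decay}, Remark \ref{rmk: exp decay for der} and \eqref{exp close in prop u* sol}). Near those three points, $u-w\to0$ in $C^1$ as $R\to\infty$ by \eqref{1d asymptotic}.

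Hence
\begin{equation*}
\int_{\partial\mathcal{T}_R}G(u,\nabla u)\cdot\nu-\int_{\partial\mathcal{T}_R}G(w,\nabla w)\cdot\nu\to0 .
\end{equation*}

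\textbf{Step 3: competitor and minimality.} I will build a competitor $\tilde u$ on $\mathcal{T}_{R}$ that equals $w$ on $\mathcal{T}_{R-1}$ and interpolates linearly to $u$ in the strip $\mathcal{T}_R\setminus\mathcal{T}_{R-1}$. The interpolation cost is $o(1)$ by the same two-regime estimate, since the bad set has bounded length and elsewhere the errors are exponentially small.

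Minimality of $u$ then gives
\begin{equation*}
J(u,\mathcal{T}_R)\le J(\tilde u,\mathcal{T}_R)=\int_{\partial\mathcal{T}_{R-1}}G(w)\cdot\nu+o(1)=\int_{\partial\mathcal{T}_R}G(u)\cdot\nu+o(1).
\end{equation*}
On the other hand, Step 1 gives
\begin{equation*}
J(u,\mathcal{T}_R)=\int_{\partial\mathcal{T}_R}G(u)\cdot\nu+\int_{\mathcal{T}_R}\Big(\tfrac13|\nabla u-\mathcal{N}(u)|^2+\mathcal{R}\Big).
\end{equation*}
Comparing the two, $\int_{\mathcal{T}_R}|\nabla u-\mathcal{N}(u)|^2\le o(1)$. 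Since the left-hand side is nondecreasing in $R$, it vanishes identically, and by continuity \eqref{1st order eq} holds everywhere.

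\textbf{Main obstacle.} I expect the main obstacle to be Step 1, namely finding the precise null-Lagrangian decomposition with a nonnegative defect. The secondary difficulty is ensuring that the boundary integral of $G$ involves only quantities controlled by the $C^1$ asymptotics, in particular that tangential derivatives near the interface crossings tend to zero, as used in the proof of Lemma \ref{lem:h sum 0}.
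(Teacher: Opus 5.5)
Your route is the paper's: the same calibration with defect $\frac13|\nabla u-\mathcal N(u)|^2$, comparison with $w=u_*(\cdot-\mathbf z)$ on $\mathcal T_R$ through a boundary-layer competitor, matching of the null-Lagrangian boundary terms, and monotonicity in $R$. The substantive gap is that Step 1 is only announced, and it is exactly where the special structure of $W$ is used.

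Here is the concrete form. To absorb the zeroth-order term $\frac13\det\mathcal N$, the multiple of $\det(\nabla v-\mathcal N)$ must be exactly $\frac13$. Set $A=\nabla v-\mathcal N(v)$ and use $\det A=\det\nabla v+\det\mathcal N-\mathrm{Cof}(\mathcal N):\nabla v$. Then
\begin{equation*}
\tfrac12|\nabla v|^2+W(v)+\tfrac13\det\nabla v-\Big(\mathcal N(v)+\tfrac13\mathrm{Cof}(\mathcal N(v))\Big):\nabla v=\tfrac12|A|^2+\tfrac13\det A .
\end{equation*}
So what must be checked is that $\mathcal N+\frac13\mathrm{Cof}(\mathcal N)$ is, column by column, a gradient in $u$. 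In other words, $\frac13\mathrm{Cof}(\mathcal N)$ must cancel exactly the non-exact part of $\mathcal N$ that you identified.

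It does. $\partial_{u_2}$ of the $(1,1)$ entry and $\partial_{u_1}$ of the $(2,1)$ entry both equal $-\frac{2\sqrt2}{3}u_2$. $\partial_{u_2}$ of the $(1,2)$ entry and $\partial_{u_1}$ of the $(2,2)$ entry both equal $-\frac{\sqrt2}{3}(1+2u_1)$. In fact $\mathcal N+\frac13\mathrm{Cof}(\mathcal N)=D\Phi(u)$ with $\Phi(u)=\sqrt2\big(\tfrac{2u_1}{3}-\tfrac{|u|^2u_1}{3}+\tfrac{u_1^2-u_2^2}{6},\ \tfrac{2u_2}{3}-\tfrac{|u|^2u_2}{3}-\tfrac{u_1u_2}{3}\big)$, so the linear term is $\mathrm{div}\,\Phi(u)$. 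The defect is then $\frac12|A|^2+\frac13\det A=\frac13|A|^2+\frac16\big((A_{11}+A_{22})^2+(A_{12}-A_{21})^2\big)$, which is your $\mathcal R\ge0$. Without this verification, Step 1 is an assertion rather than a proof.

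Two smaller points about Step 3. First, the middle equality $J(\tilde u,\mathcal T_R)=\int_{\partial\mathcal T_{R-1}}G(w)\cdot\nu+o(1)$ is false. Each of the three interfaces crosses the strip $\mathcal T_R\setminus\mathcal T_{R-1}$, so $J(w,\mathcal T_R\setminus\mathcal T_{R-1})$ tends to three times the heteroclinic energy, not to $0$. What your interpolation estimate actually gives is $J(\tilde u,\mathcal T_R)=J(w,\mathcal T_R)+o(1)=\int_{\partial\mathcal T_R}G(w)\cdot\nu+o(1)$. Step 2 then turns this into $\int_{\partial\mathcal T_R}G(u)\cdot\nu+o(1)$, so your end-to-end inequality survives.

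Second, the boundary flux of the null Lagrangian involves only $u$ and its derivative along the sides, here $\partial_{\bn_{ij}}u$. That derivative is matched directly by \eqref{1d symmetry one leg}, so no decay of a "tangential derivative" is needed there. The decay of $\partial_{\bt_{ij}}u$ near the crossings is needed only because you interpolate $u$ and $w$ at the same point in the strip. It does hold, as shown in the proof of Lemma \ref{lem:h sum 0}. The paper avoids it by extending the boundary difference homothetically, $v=w+\eta(r)\,(u-w)(\bx R/r)$ on $\partial\mathcal T_r$, which requires only $\|u-w\|_{H^1(l_{ij,R})}\to0$.
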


\begin{proof}
    By \eqref{def of N}, direct computation gives
    \begin{equation*}
    \f12|\mathcal{N}(u)|^2= |u|^4+3u_1u_2^2-u_1^3-\f12|u|^2+\f12,
    \end{equation*}
    \begin{equation*}
        \mathrm{det}\mathcal{N}(u)= u_1^3-3u_1u_2^2-\f32|u|^2+\f12.
    \end{equation*}
    Recalling formula \eqref{potential specific}, we obtain the identity 
    \begin{equation}\label{W rep by N}
        W(u)=\f12|\mathcal{N}(u)|^2 +\f13 \mathrm{det}\mathcal{N}(u).
    \end{equation}
    We further compute
    \begin{equation}\label{calibration 1}
    \begin{split}
        &\f12|\na u|^2 +W(u)- \f12|\na u-\mathcal{N}(u)|^2 \\
        =&\f13\mathrm{det}(\mathcal{N}(u)) +\mathcal{N}(u):\nabla u\\
        =& \f13 \mathrm{det}(\na u-\mathcal{N}(u))-\f13 \mathrm{det}(\na u) + \left(\f13 \mathrm{Cof}(\mathcal{N}(u))+\mathcal{N}(u)\right): \na u,
    \end{split}
    \end{equation}
    where for a $2\times 2$ matrix $M=\begin{pmatrix}
            a & b\\
            c & d
        \end{pmatrix}$, $\mathrm{Cof}(M)$ is defined by $\begin{pmatrix}
            d & -c\\
            -b & a
        \end{pmatrix}$. 
  
By the divergence theorem, $\int_\Omega \mathrm{det}(\na u)\,d\bx=\int_{\pa \Omega} u_1\pa_\tau u_2\,d\mathcal{H}^1$, so $-\f13\mathrm{det}(\na u)$ is a null-Lagrangian. In addition, a further computation gives 
\begin{equation*}
  \f13 \mathrm{Cof}(\mathcal{N}(u))+\mathcal{N}(u)=\sqrt2\begin{pmatrix}
      \f23+\f{u_1}{3}-u_1^2-\f{u_2^2}{3} & -\f{u_2}{3}-\f23u_1u_2\\
      -\f{u_2}{3}-\f23u_1u_2 & \f23-\f{u_1}{3}-u_2^2-\f{u_1^2}{3} 
  \end{pmatrix} =D\Phi(u),
\end{equation*}
where
\begin{equation*}
    \Phi(u):= \sqrt2\begin{pmatrix}
        -\frac{|u^2|u_1}{3}-\f{u_2^2-u_1^2}{6}+\frac{2u_1}{3} \\
         -\frac{|u^2|u_2}{3}-\f{u_1u_2}{3}+\frac{2u_2}{3} .
    \end{pmatrix}
\end{equation*}
Therefore, $\left(\f13 \mathrm{Cof}(\mathcal{N}(u))+\mathcal{N}(u)\right): \na u= \mathrm{div}(\Phi(u))$ is also a null-Lagrangian. Then we rearrange \eqref{calibration 1} to obtain
\begin{equation}\label{calibration 2}
\begin{split}
    &\f12|\na u|^2 +W(u)+\underbrace{\left(\f13 \mathrm{det}(\na u)- \mathrm{div}(\Phi(u))\right)}_{\text{null-Lagrangian}}\\
    &=\f12|\na u-\mathcal{N}(u)|^2+\f13 \mathrm{det}(\na u-\mathcal{N}(u))\geq \f13|\na u-\mathcal{N}(u)|^2.
\end{split}
\end{equation}

Take $R\gg1$. Recall the following notations in the proof of Lemma \ref{lem:h sum 0}. 
\begin{equation*}
    \mathcal{T}_R=\bigcap\limits_{(i,j)\in\{(1,2),(2,3),(3,1)\}}\{\bx\cdot \bt_{ij}< R\},\quad l_{ij,R}=\{R\bt_{ij}+s\bn_{ij}: |s|<\sqrt3 R\}.
\end{equation*}
Let $\mathbf{z}\in \BR^2$ be the vector determined in Remark \ref{rmk: z} such that $\mathbf{z}\cdot\bn_{ij}=h_{ij}$. For convenience, we write
\begin{equation*}
    u_{\mathbf{z}}^*(\bx)=u_*(\bx-\mathbf{z}).
\end{equation*}

The asymptotic convergence \eqref{1d symmetry one leg}, together with the exponential decay away from the interfaces, implies that $u(\bx)$ and $ u_{\mathbf{z}}^*(\bx)$ have almost identical traces on each side of $\mathcal{T}_R$. More precisely, we have 
\begin{equation}\label{u close to u_* on paT}
    \lim\limits_{R\ri\infty} \left(\|u- u_{\mathbf{z}}^*\|_{C^2(l_{ij,R})}+\|u- u_{\mathbf{z}}^*\|_{H^1(l_{ij,R})}\right) =0.
\end{equation}
We point out that the $C^2$ convergences follows directly from \eqref{1d symmetry one leg}, while the $H^1$ convergence is a consequence of the $C^2$ convergence plus uniform exponential decays away from the interface. 

We now compare the energies $J(u,\mathcal{T}_R)$ and $J( u_{\mathbf{z}}^*,\mathcal{T}_R)$. Define an energy competitor $v$ on $\mathcal{T}_R$ by
\begin{align*}
    &\qquad\qquad \qquad v(\bx)= u_{\mathbf{z}}^*(\bx), \quad \bx\in \mathcal{T}_{R-1},\\
    &v(\bx)=  u_{\mathbf{z}}^*(\bx)+ \eta(r)\left( u(\bx\frac{R}{r})-u_{\mathbf{z}}^*(\bx\frac{R}{r})  \right),\quad \bx\in\pa \mathcal{T}_r,\ r\in (R-1,R]. 
\end{align*}
Here we choose $\eta\in C^\infty([R-1,R])$ as a smooth cutoff function such that $\eta(R-1)=0,\ \eta(R)=1,\ |\eta'|\leq 2$. Then $v=u$ on $\pa \mathcal{T}_R$. On the boundary layer $A_R:=\mathcal{T}_R\setminus \mathcal{T}_{R-1}$, by \eqref{u close to u_* on paT} we have 
\begin{align*}
    &J(v,A_R)-J( u_{\mathbf{z}}^*,A_R)\\
    &\qquad\leq \|\na u_{\mathbf{z}}^*\|_{L^2(A_R)}\|\na(v-u_{\mathbf{z}}^*)\|_{L^2(A_R)}+\f12\|\na(v-u_{\mathbf{z}}^*)\|_{L^2(A_R)}^2\\
    &\qquad\quad +\|W_u(u_\mathbf{z}^*)\|_{L^2(A_R)}\|v-u_{\mathbf{z}}^*\|_{L^2(A_R)}+C\|v-u_{\mathbf{z}}^*\|_{L^2(A_R)}^2   \\
    &\qquad\leq  C \left( \sum_{(i,j)}\bigg( \|u- u_{\mathbf{z}}^*\|_{H^1(l_{ij,R})}+\|u- u_{\mathbf{z}}^*\|_{H^1(l_{ij,R})}^2  \bigg)\right)=o(1),\quad R\to\infty.
\end{align*}

By minimality of $u$ we infer that 
\begin{equation}\label{comp ene u u_*}
    J(u,\mathcal{T}_R)\leq J(v,\mathcal{T}_R)\leq J( u_{\mathbf{z}}^*,\mathcal{T}_R)+o(1),\quad \text{as }R\to\infty.
\end{equation}

\eqref{u close to u_* on paT} also implies that the null-Lagrangian terms for $u$ and $u_{\mathbf{z}}^*$ agree asymptotically:
\begin{equation}\label{non-lag}
\begin{split}
    &\lim\limits_{R\to\infty} \int_{\mathcal{T}_R} \left\{\left(\f13 \mathrm{det}(\na u)- \mathrm{div}(\Phi(u))\right) -\left(\f13 \mathrm{det}(\na u_{\mathbf{z}}^*)- \mathrm{div}(\Phi(u_{\mathbf{z}}^*))\right)\right\}\,d\bx\\
    =& \lim\limits_{R\to\infty} \int_{\pa \mathcal{T}_R} \left\{\left(\f13u_1\pa_{\bt} u_2 - \Phi(u)\cdot \nu \right)    -\left(\f13u_{\mathbf{z},1}^*\pa_{\bt} u_{\mathbf{z},2}^*- \Phi(u_{\mathbf{z}}^*)\cdot \nu \right)\right\}\,d\mathcal{H}^1 =0.
\end{split} 
\end{equation}

Combining \eqref{calibration 2}, \eqref{comp ene u u_*} and \eqref{non-lag}, we obtain
\begin{align*}
    \int_{\mathcal{T}_R} \f13&\left(|\nabla u-\mathcal{N}(u)|^2\right)\,d\bx\leq J(u,\mathcal{T}_R) +\int_{\mathcal{T}_R} \left(\f13 \mathrm{det}(\na u)- \mathrm{div}(\Phi(u))\right) \,d\bx \\
    &\leq \int_{\mathcal{T}_R} \left( \f12|\na  u_{\mathbf{z}}^*|^2+W(u_{\mathbf{z}}^*) +\f13 \mathrm{det}(\na u_{\mathbf{z}}^*)-\mathrm{div} (\Phi(u_{\mathbf{z}}^*))\right)\,d\bx+o(1)\\
    &= \int_{\mathcal{T}_R} \left( \f12|\na u_{\mathbf{z}}^*-\mathcal{N}(u_{\mathbf{z}}^*)|^2+ \f13\mathrm{det}(\na u_{\mathbf{z}}^*-\mathcal{N}(u_{\mathbf{z}}^*)) \right)\,d\bx+o(1)\\
    &=o(1),\quad \text{as } R\to\infty,
\end{align*}
where we have used the relation $\na u_*=\mathcal{N}(u_*)$. Finally, taking $R\to\infty$ yields $\na u=\mathcal{N}(u)$ everywhere in $\BR^2$. The proof is complete.

\end{proof}

\begin{rmk}\label{rmk:u* min}
    Proposition \ref{prop: 1st order eq} and its proof also show that $u_*$ is an entire minimizing solution. Indeed, let $K\subset\mathbb{R}^2$ be a bounded open set and let $v$ be a competitor such that $v-u_*\in H_0^1(K;\mathbb{R}^2)$. Since the null-Lagrangian terms in \eqref{calibration 2} depend only on the boundary trace, they have the same integrals for $v$ and $u_*$. Moreover, $\na u_*-\mathcal{N}(u_*)=0$ and \eqref{calibration 2} together imply $J(v,K)\geq J(u_*,K)$, thus proving the minimality of $u_*$.
\end{rmk}

The next lemma shows that $u$ only takes values in the interior of the convex hull of $\{a_1,a_2,a_3\}$. 

\begin{lemma}\label{lem: convex hull}
    There exist $\lambda_1(\bx),\,\lambda_2(\bx),\,\lambda_3(\bx)\in C^\infty(\BR^2; (0,1))$ such that 
    \begin{equation}\label{lambda repre}
        u(\bx)=\sum_{i=1}^3 \lambda_i(\bx) a_i,\quad \sum_{i=1}^3 \lambda_i(\bx)=1.
    \end{equation}
\end{lemma}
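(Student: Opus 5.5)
The plan is to take the $\lambda_i$ to be the barycentric coordinates of $u$, which are affine functions of $u$, and then use the first-order system \eqref{1st order eq} from Proposition \ref{prop: 1st order eq} to show that each $\lambda_i$ obeys a linear first-order equation. That equation forbids $\lambda_i$ from vanishing.

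\textbf{Step 1 (definition).} I set
\begin{equation*}
\lambda_i(\bx):=\tfrac13\bigl(1+2a_i\cdot u(\bx)\bigr),\quad i=1,2,3.
\end{equation*}
Since $\sum_i a_i=0$ we get $\sum_i\lambda_i=1$. Since $\sum_i a_i\otimes a_i=\tfrac32 I$ we get $\sum_i\lambda_i a_i=\tfrac23\sum_i (a_i\cdot u)a_i=u$. So \eqref{lambda repre} holds, and each $\lambda_i\in C^\infty$ because $u$ is smooth. What remains is to show $\lambda_i\in(0,1)$.

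\textbf{Step 2 (algebraic identity).} I would rewrite $\mathcal N$ in barycentric form. Formula \eqref{ai otimes} shows that $\sum_k\lambda_k a_k\otimes a_k$ is an affine function of $u$ once $\lambda_k$ is affine in $u$. This gives the polynomial identity
\begin{equation*}
\mathcal N(u)=\sqrt2\Bigl(\sum_k\lambda_k a_k\otimes a_k-u\otimes u\Bigr),
\end{equation*}
which is exactly the computation leading to \eqref{nabla u}, now valid for arbitrary $u$.

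Next I compute $\na\lambda_i=\tfrac23 a_i^T\na u=\tfrac23 a_i^T\mathcal N(u)$, using $a_i\cdot a_k=1$ for $k=i$ and $a_i\cdot a_k=-\tfrac12$ for $k\neq i$, together with $a_i\cdot u=\tfrac{3\lambda_i-1}{2}$. This gives
\begin{equation*}
\sum_k\lambda_k(a_i\cdot a_k)a_k-(a_i\cdot u)u=\tfrac32\lambda_i a_i-\tfrac12 u-\tfrac{3\lambda_i-1}{2}u=\tfrac32\lambda_i(a_i-u).
\end{equation*}
Hence
\begin{equation*}
\na\lambda_i=\sqrt2\,\lambda_i\,(a_i-u).
\end{equation*}
This identity is the heart of the proof, and I expect it to be the main obstacle only in the sense of getting the algebra right.

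\textbf{Step 3 (non-vanishing and sign).} The vector field $V_i:=\sqrt2(a_i-u)$ is smooth and bounded. Integrating along the segment from $\bx_0$ to $\bx$ gives
\begin{equation*}
\lambda_i(\bx)=\lambda_i(\bx_0)\exp\Bigl(\int_0^1 V_i(\bx_0+t(\bx-\bx_0))\cdot(\bx-\bx_0)\,dt\Bigr).
\end{equation*}
So $\lambda_i$ either vanishes identically or never vanishes, and in the latter case it has constant sign. By the exponential convergence \eqref{exp decay}, $u\to a_i$ deep inside $\Om_i$, so $\lambda_i\to\tfrac13(1+2|a_i|^2)=1$ there. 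Therefore $\lambda_i>0$ everywhere. Since all three $\lambda_k$ are positive and $\sum_k\lambda_k=1$, each satisfies $\lambda_i<1$, which completes the argument.
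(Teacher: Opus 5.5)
Your proposal is correct and follows the paper's proof essentially line by line. You use the same barycentric definition $\lambda_i=\frac13(1+2a_i\cdot u)$ and the same identity $\mathcal{N}(u)=\sqrt2\,(\sum_k\lambda_k a_k\otimes a_k-u\otimes u)$ to get $\nabla\lambda_i=\sqrt2\,\lambda_i(a_i-u)$ from $\nabla u=\mathcal{N}(u)$, and the same sign argument from $\lambda_i\to1$ inside $\Omega_i$. Your integration along segments and the remark that $\lambda_i<1$ follows from positivity of the other two coordinates only make explicit details the paper leaves implicit.
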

    
\begin{proof}
    Set
    \begin{equation*}
        \lambda_i(\bx):=\frac{1+2a_i\cdot u(\bx)}{3},\quad i\in\{1,2,3\}.
    \end{equation*}
    Since $a_i\cdot a_i=1$, $a_i\cdot a_j=-\f12$ for $i\neq j$ and $\sum_{i=1}^3 a_i=0$, we can easily verify that $\{\lambda_i(\bx)\}_{i=1}^3$ satisfies \eqref{lambda repre} and they are uniquely determined. It remains to show $\lambda_i(\bx)>0$. 

    By \eqref{ai otimes} and \eqref{def of N} we have
    \begin{equation*}
    \mathcal{N}(u)=\sqrt{2}(\sum_i \lambda_i(\bx) a_i\otimes a_i-u\otimes u).    
    \end{equation*}
    Direct computation shows
    \begin{align*}
        \na \lambda_i&= \f23 a_i\cdot \na u\\
   (\text{since }\mathcal{N}(u)\text{ is symmetric})     &=\frac{2\sqrt2}{3}(\sum_j \lambda_j a_j\otimes a_j-u\otimes u) a_i\\
        &=\frac{2\sqrt2}{3}(\lambda_ia_i-\f12\sum_{j\neq i} \lambda_ja_j-\f{3\lambda_i-1}{2}u)\\
        &=\sqrt2 \lambda_i(a_i-u).
    \end{align*}
    Since $a_i-u(\bx)$ is smooth, we conclude that either $\lambda_i(\bx)\equiv 0$ or $\lambda_i(\bx)$ does not change sign for all $\bx\in\mathbb{R}^2$. Using the fact $\lambda_i(\bx)\ri 1$ as $|\bx|\to\infty$ along any fixed ray compactly contained in $\Om_i$, we conclude that $\lambda_i(\bx)>0$ on $\mathbb{R}^2$ for each $i\in\{1,2,3\}$. This proves the lemma.
    
\end{proof}

We are now ready to identify $u(\bx)$ as $u_*(\bx-\mathbf{z})$. 

\begin{proposition}\label{prop: 3 junciton}
    Let $u$ be an entire minimizing solution of \eqref{EL eq} satisfying
    \begin{equation*}
    u(r\bx)\xrightarrow[r\to\infty]{L^1_{loc}} \sum_{i=1}^3 a_i\mathbf{1}_{\Om_i},\quad \Om_i\text{ given by \eqref{def Omi}}.
\end{equation*}  
Then there exists $\mathbf{z}\in \mathbb{R}^2$ such that 
\begin{equation*}
    u(\bx)\equiv u_*(\bx-\mathbf{z}),
\end{equation*}
where $u_*$ is the map defined in \eqref{map:u*}.
\end{proposition}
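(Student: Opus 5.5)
By Proposition \ref{prop: 1st order eq} and Lemma \ref{lem: convex hull}, $u$ solves $\na u=\mathcal{N}(u)$. Moreover $u=\sum_i\lambda_i a_i$ with $\lambda_i>0$, $\sum_i\lambda_i=1$, and
\begin{equation*}
\na\lambda_i=\sqrt2\,\lambda_i(a_i-u).
\end{equation*}
The plan is to integrate this system explicitly. Since $\lambda_i>0$, we may set $\mu_i:=\log\lambda_i$, which gives
\begin{equation*}
\na\mu_i=\sqrt2(a_i-u(\bx)),\qquad i=1,2,3.
\end{equation*}
Subtracting two of these, the unknown $u$ cancels: $\na(\mu_i-\mu_j)=\sqrt2(a_i-a_j)$, a constant vector. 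Hence
\begin{equation*}
\mu_i-\mu_j=\sqrt2(a_i-a_j)\cdot\bx+c_{ij},
\end{equation*}
with constants $c_{ij}$ satisfying $c_{ij}+c_{jk}+c_{ki}=0$. Equivalently, $\mu_i(\bx)=\sqrt2 a_i\cdot\bx+c_i+\phi(\bx)$ for some constants $c_i$ and a common scalar function $\phi$. The normalization $\sum\lambda_i=1$ then forces
\begin{equation*}
\lambda_i(\bx)=\frac{e^{\sqrt2 a_i\cdot\bx+c_i}}{\sum_j e^{\sqrt2 a_j\cdot\bx+c_j}}.
\end{equation*}
This uses only the differences $\mu_i-\mu_j$, so no further compatibility check is needed. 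It is nevertheless consistent: with this formula one has $\na\log\lambda_i=\sqrt2(a_i-u)$, as the computation in Proposition \ref{prop: u* sol} shows.

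Next, absorb the constants into a translation. Since $a_1,a_2,a_3$ span $\BR^2$ and $\sum a_i=0$, the map $\mathbf{w}\mapsto(\sqrt2a_i\cdot\mathbf{w})_{i=1}^3$ has image $\{(t_1,t_2,t_3):\sum t_i=0\}$. Adding a common constant to all $c_i$ does not change $\lambda_i$, so we may normalize $\sum c_i=0$. Then there is a unique $\mathbf{w}$ with $c_i=-\sqrt2a_i\cdot\mathbf{w}$, and hence $u(\bx)=u_*(\bx-\mathbf{w})$.

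It remains to identify $\mathbf{w}$ with the vector $\mathbf{z}$ of Remark \ref{rmk: z}. This is the only subtle point, but the statement only asks for the existence of some $\mathbf{z}$, so we can simply take $\mathbf{z}:=\mathbf{w}$. For consistency one can note that the leg asymptotics of $u_*(\cdot-\mathbf{w})$ produce shifts $\bn_{ij}\cdot\mathbf{w}$, and these must coincide with $h_{ij}$ by \eqref{1d symmetry one leg}. Hence $\mathbf{w}=\mathbf{z}$, because the map $\mathbf{z}\mapsto(\bn_{ij}\cdot\mathbf{z})$ is injective. The main technical input, positivity of $\lambda_i$, which is needed to take logarithms, was already secured in Lemma \ref{lem: convex hull}. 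So I expect no real obstacle beyond careful bookkeeping of the constants.
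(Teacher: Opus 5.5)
Your proposal is correct and follows the paper's proof essentially step for step. Like the paper, you take logarithms of the positive barycentric coordinates from Lemma \ref{lem: convex hull}, integrate $\na\log(\lambda_i/\lambda_j)=\sqrt2(a_i-a_j)$, normalize via $\sum_i\lambda_i=1$, and absorb the constants into a translation (the paper writes the constants as $\log\lambda_i(0)$ and centers them by their mean $\Lambda$); your closing remark on identifying $\mathbf{z}$ with the vector of Remark \ref{rmk: z} via injectivity of $\mathbf{z}\mapsto(\bn_{ij}\cdot\mathbf{z})$ matches the paper's final observation.
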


\begin{proof}
By Lemma \ref{lem: convex hull}, we consider $\lambda_i(\bx)=\frac{1+2a_i\cdot u(\bx)}{3}>0$ for $i\in\{1,2,3\}$ which satisfies
\begin{equation*}
    \nabla \lambda_i =\sqrt{2}\lambda_i (a_i-u).
\end{equation*}
Then for any $i,j\in\{1,2,3\}$,
\begin{equation*}
    \na \log \f{\lambda_i}{\lambda_j} =\sqrt{2} (a_i-a_j)\ \Rightarrow\  \frac{\lambda_i(\bx)}{\lambda_j(\bx)}=\frac{\lambda_i(0)}{\lambda_j(0)} e^{\sqrt2 (a_i-a_j)\cdot \bx}.
\end{equation*}
Fix $j=1$. $\sum_{i=1}^3 \lambda_i(\bx)=1$ implies 
\begin{align*}
   &\qquad\qquad \lambda_1(x)\left( \sum_{i=1}^3 \f{\lambda_i(0)}{\lambda_1(0)} e^{\sqrt2 (a_i-a_1)\cdot\bx}\right)=1,\\
   &\Rightarrow \qquad \ \lambda_1(\bx) =\f{\lambda_1(0)e^{\sqrt2 a_1\cdot \bx}}{\sum_{i=1}^3 \lambda_i(0)e^{\sqrt2 a_i\cdot\bx}},\ \ \forall \bx\in\mathbb{R}^2.
\end{align*}
Similarly, for each $j\in \{1,2,3\}$,
\begin{equation}\label{formula lambda_j}
    \lambda_j(\bx) =\f{\lambda_j(0)e^{\sqrt2 a_j\cdot \bx}}{\sum_{i=1}^3 \lambda_i(0)e^{\sqrt2 a_i\cdot\bx}}.
\end{equation}
Set
\begin{equation*}
    \Lambda:= \f13 \sum_{i=1}^3 \log \lambda_i(0).
\end{equation*}
Since 
$$
\sum_{i=1}^3 (\log\lambda_i(0)-\Lambda)=0,\quad \sum_{i=1}^3a_i=0,
$$
there exists a unique point $\mathbf{z}\in \mathbb{R}^2$ satisfying
\begin{equation*}
    \sqrt2\,a_i\cdot \mathbf{z}=\Lambda-\log \lambda_i(0),\quad i\in\{1,2,3\}.
\end{equation*}
Substituting this into the above formula for $\lambda_j$ gives 
\begin{equation*}
    \lambda_j(\bx)= \f{e^{\sqrt2 a_j\cdot (\bx-\mathbf{z})}}{\sum_{i=1}^3e^{\sqrt2 a_i\cdot (\bx-\mathbf{z})}}.
\end{equation*}
It is obvious that $\mathbf{z}$ defined here has the same $\bn_{ij}$ projections and hence equals the $\mathbf{z}$ from Remark \ref{rmk: z}. This completes the proof. 

\end{proof}

The preceding arguments also apply to solutions that minimize only within the $D_3$-equivariant class.
\begin{corol}
    Let $u$ be an entire solution of \eqref{EL eq} and satisfy the following conditions
    \begin{itemize}
        \item $u$ is $D_3$-equivariant, i.e. $u\circ g=g\circ u$ for $g\in D_3$.
        \item It has the triple-junction asymptotics \eqref{pre:L1 conv} \eqref{1d asymptotic} at infinity  with $\Om_i$ given by \eqref{def Omi} and $h_{ij}=0$ due to $D_3$-symmetry. 
        \item For any $v\in H_0^1(K;\BR^2)$ with $K\Subset\BR^2$ bounded, open and $D_3$-invariant, such that $u+v$ is also $D_3$-equivariant, one has $J(u,K)\leq J(u+v,K)$.  
    \end{itemize} 
    Then,
    \begin{equation*}
        u(\bx)\equiv u_*(\bx).
    \end{equation*}
\end{corol}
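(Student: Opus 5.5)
The plan is to repeat the calibration argument of Proposition \ref{prop: 1st order eq} with $\mathbf{z}=0$, checking that every competitor used there stays inside the $D_3$-equivariant class, and then to integrate the first-order system as in Lemma \ref{lem: convex hull} and Proposition \ref{prop: 3 junciton}. Since $h_{ij}=0$, the comparison map from Remark \ref{rmk: z} is $u_*$ itself. By Proposition \ref{prop: u* sol}, $u_*$ is $D_3$-equivariant with exactly the asymptotics \eqref{1d symmetry one leg} and $h_{ij}=0$. The triangle $\mathcal{T}_R$ is $D_3$-invariant, since it is bounded by lines orthogonal to the $\bt_{ij}$, which are permuted by $D_3$. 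The shrunken triangles $\mathcal{T}_r$ and the radial rescaling $\bx\mapsto \bx R/r$ commute with the linear maps $g\in D_3$.

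The first step is to check that the interpolating competitor $v$ from the proof of Proposition \ref{prop: 1st order eq} is $D_3$-equivariant. On $\mathcal{T}_{R-1}$ we have $v=u_*$, which is equivariant. On the layer,
\[
v(g\bx)=u_*(g\bx)+\eta(r)\bigl(u(g\bx R/r)-u_*(g\bx R/r)\bigr)=g\,v(\bx).
\]
This holds because $g\bx\in\pa\mathcal{T}_r$ whenever $\bx\in\pa\mathcal{T}_r$, and because $u$, $u_*$ are equivariant and $g$ is linear. Hence $v-u\in H^1_0(\mathcal{T}_R)$ is an admissible perturbation in the restricted minimality class, and $J(u,\mathcal{T}_R)\le J(v,\mathcal{T}_R)$. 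Strictly, $K=\mathcal{T}_R$ is open, bounded and $D_3$-invariant, and $v-u$ vanishes on $\pa\mathcal{T}_R$.

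The second step reuses \eqref{u close to u_* on paT}. It follows from \eqref{1d asymptotic} and the exponential decay \eqref{exp decay}, Remark \ref{rmk: exp decay for der}, neither of which needs unrestricted minimality; they are assumed in the hypotheses. This gives $J(v,A_R)-J(u_*,A_R)=o(1)$, and the null-Lagrangian boundary terms in \eqref{non-lag} match asymptotically. The calibration inequality \eqref{calibration 2} is pointwise and algebraic, so the same chain of inequalities yields $\int_{\mathcal{T}_R}|\na u-\mathcal{N}(u)|^2\to0$. Therefore $\na u=\mathcal{N}(u)$ on $\BR^2$.

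Finally, Lemma \ref{lem: convex hull} and the integration in Proposition \ref{prop: 3 junciton} only use the first-order system together with $\lambda_i\to1$ along rays inside $\Om_i$, which follows from \eqref{pre:L1 conv} and \eqref{exp decay}. They therefore give $u=u_*(\cdot-\mathbf{z})$ for some $\mathbf{z}$. To see that $\mathbf{z}=0$, note that the asymptotic shifts of $u_*(\cdot-\mathbf{z})$ are $\bn_{ij}\cdot\mathbf{z}$, and these must equal $h_{ij}=0$ for all three legs, which forces $\mathbf{z}=0$. Alternatively, equivariance of $u$ gives $u(0)=g\,u(0)$ for all $g\in D_3$, so $u(0)=0$, hence $\lambda_i(0)=1/3$ and $\mathbf{z}=0$.

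The main obstacle I expect is the first step: confirming that the boundary-layer construction, and hence the energy comparison, lives entirely within the equivariant class. Everything hinges on the $D_3$-invariance of the triangles $\mathcal{T}_r$ with the chosen orientation of $\Om_i$.
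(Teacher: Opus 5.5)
Your proposal is correct and follows the paper's proof: you rerun the calibration comparison of Proposition \ref{prop: 1st order eq} on the $D_3$-invariant triangles $\mathcal{T}_R$ with the boundary-layer competitor $v$, which you check stays equivariant, to get $\na u=\mathcal{N}(u)$, then integrate as in Proposition \ref{prop: 3 junciton} and fix $\mathbf{z}=0$ via $u(0)=0$, i.e.\ $\lambda_i(0)=\f13$, or equivalently via $h_{ij}=0$. You verify the equivariance of $v$ in more detail than the paper does; like the paper, you tacitly count the exponential decay \eqref{exp decay} as part of the assumed triple-junction asymptotics, and this decay is what the $H^1$ closeness \eqref{u close to u_* on paT} on the long sides of $\mathcal{T}_R$ requires.
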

\begin{proof}
The existence of entire $D_3$ equivariant solutions that satisfy these conditions has been established in \cite{bronsard1996three,alikakos2011entire}. The proof of the classification is almost identical to that of Proposition \ref{prop: 1st order eq} and Proposition \ref{prop: 3 junciton}. The only point that requires care is that the competitor $v$ used to compare $J(u,\mathcal{T}_R)$ and $J(u_*,\mathcal{T}_R)$ must remain $D_3$-equivariant,  which automatically holds due to the symmetry of $u$, $u_*$ and $\mathcal{T}_R$. Finally, solving the first-order system $\na u=\mathcal{N}(u)$ gives \eqref{formula lambda_j}, and $D_3$-symmetry forces $\lambda_1(0)=\lambda_2(0)=\lambda_3(0)=\f13$. Therefore $u\equiv u_*$.

\end{proof}

We have therefore identified $u_*$, up to translation and orthogonal change of coordinates, as the unique entire minimizing solution with triple-junction asymptotics. It is also the unique solution minimizing within the $D_3$-equivariant class. We now have all ingredients for the proof of the main theorem.
\begin{proof}[Proof of Theorem \ref{main thm}]
Theorem \ref{main thm} directly follows from Proposition \ref{prop: 1 phase}, Proposition \ref{prop: 2 phase} and Proposition \ref{prop: 3 junciton}, which classify the one-phase, two-phase and triple-junction solutions, respectively. 
\end{proof}

\bibliographystyle{acm}
\bibliography{bib-cla}

\end{document}